\newtheorem{lemma}{Lemma}
\newtheorem{theorem}[lemma]{Theorem}
\newtheorem{corollary}[lemma]{Corollary} 
\newtheorem{proposition}[lemma]{Proposition} 
\newtheorem{definition}[lemma]{Definition} 
\newtheorem{remark}[lemma]{Remark} 
\newtheorem{example}[lemma]{Example} 
\newenvironment{Aenum}
{\begin{enumerate}

}
 {\end{enumerate}}
\renewcommand{\epsilon}{\varepsilon}
\newcommand{\ctr}[2]{ {\mathfrak u}_{#1#2}}
\newcommand{\lin}{L^{1}_{\mathrm {loc}}(\R^+;[0,+\infty))}
\newcommand{\R}{\mathbb{R}}
\newcommand{\N}{\mathbb{N}}
\newcommand{\abs}[1]{\left|#1\right|}
\newcommand{\norm}[1]{\left\lVert#1\right\rVert}
\newcommand{\base}[1]{\widehat{\mathrm e}_{#1}}
\begin{document}

\title[Consensus in Multiagent Systems under communication failure]{Consensus of Multiagent Systems\\ under communication failure}
 \author[M.~Bentaibi]{Mohamed Bentaibi}\thanks{M.~Bentaibi is at d-fine s.r.l., via Giuseppe Mengoni 4, 20121 Milano, Italy (e-mail: mohamed.bentaibi@d-fine.com).}
 \author[L.~Caravenna]{Laura Caravenna}\thanks{L.~Caravenna is at the Department of Mathematics `Tullio Levi-Civita', Università degli Studi di Padova, Via Trieste 63, 35121 Padova, Italy (e-mail: laura.caravenna@unipd.it).}
  \author[J.-P.~A.~Gauthier]{Jean-Paul A. Gauthier}\thanks{J.-P.~A.~Gauthier is at the University of Toulon, France (e-mail: jean-paul.gauthier@univ-tln.fr)}
   \author[F.~Rossi]{Francesco Rossi}\thanks{F.~Rossi is at Università Iuav di Venezia, Dipartimento di Culture del Progetto
 Ca’ Tron, S. Croce 1957,
 30135 Venezia, Italy (e-mail: francesco.rossi@iuav.it).}
 \thanks{L.C and F.R. are members of the Gruppo Nazionale per l'Analisi Matematica, la Probabilit\`a e le loro Applicazioni (GNAMPA) of the Istituto Nazionale di Alta Matematica (INdAM).
 This research was supported by PRIN
 2020 ``Nonlinear evolution PDEs, fluid dynamics and transport equations: theoretical foundations and applications'' and PRIN PNRR P2022XJ9SX of the European Union – Next Generation EU}
 

\begin{abstract}
We consider multi-agent systems with cooperative interactions and study the convergence to consensus in the case of time-dependent connections, with possible communication failure.

 We prove a new condition ensuring consensus: we define a graph in which directed arrows correspond to connection functions that converge (in the weak sense) to some function with a positive integral on all intervals of the form $[t,+\infty)$. If the graph has a node reachable from all other indices, i.e.~``globally reachable'', then the system converges to consensus. We show that this requirement generalizes some known sufficient conditions for convergence, such as Moreau's or the Persistent Excitation one.
We also give a second new condition, transversal to the known ones: total connectedness of the undirected graph formed by the non-vanishing of limiting functions.

\noindent MS Codes: 68Q25, 68R10, 68U05

\noindent Keywords: multi-agent systems, cooperative systems, consensus, time-dependent connections

\end{abstract}

\maketitle



\tableofcontents

\section{Introduction}\label{S:intro}

The study of multi-agent interacting systems is crucial in control theory, both for intrinsic theoretical interests and for the numerous applications, see e.g.~\cite{motsch2011new,ayi2021mean,haskovec2013flocking,benedetto2022mean,shvydkoy2018topological,choicucker,moshtagh2007distributed,olfati2006flocking,ben2005opinion,bullo2009distributed,albi2020mathematical,jabin2014clustering}. One of the main issues is the problem of {\it consensus}, i.e.~of verifying or ensuring that all agents reach a common value, see e.g.~\cite{bonnet2021consensus,bonnet2022consensus,motsch2014heterophilious,choi2021consensus,olfati2007consensus,nedich2015convergence,valentini2017achieving,jadbabaie2003coordination,ren2008distributed,anderson2016convergence,cicolani2024first,continelli2023convergence,moreau2004stability}. This is the problem that we address in this article.

One of the open problems for multi-agent systems is to understand their behaviour under communication failure. It has been studied in many contributions, see e.g.~\cite{tang2020bearing,manfredi2016criterion,bonnet2021consensus, cicolani2024first,continelli2023convergence}. Among them, an interesting line of contributions focuses on sufficient conditions that ensure consensus. A typical example is the condition introduced by Moreau in~\cite{moreau2004stability}, which is a generalization of the so-called {\it persistent excitation}, see e.g.~\cite{ABR-PE,bonnet2024exponential,ChSi2010,ChSi2014,tang2020bearing,chowdhury2018estimation,shi2013role}: if connections between agents are activated for a sufficient amount of time and on a network with a suitable structure, then consensus occurs. We discuss it in detail in~\S~\ref{s-PE}. Another very relevant condition, introduced by Hendrickx and Tsitsiklis, is called {\it the cut-balance} assumption, see~\cite{martin2013continuous,hendrickx2012convergence}. We will discuss it in detail in~\S~\ref{s-cutbalance}. The main result of our article is to provide two new conditions ensuring convergence of multi-agent systems. We {{}prove} that such conditions generalize the Moreau condition, and {{}show through examples} that our analysis and results are transversal to the cut-balance assumption: {{}there are situations where our conditions ensure consensus convergence while the cut-balance assumption does not hold, but also opposite cases where our result cannot be applied while the cut-balance assumption works}.\\

More in detail, we consider the system, for $j=1,\dots,N$,
	\begin{align}\label{E:basicsystem}
	&\dot x_{j}=\sum_{k=1}^{N} \ctr{j}{k}(t)\left(x_{k}-x_{j}\right)\,,
	&&\text{where } \ctr{j}{k}{{}(t)}\geq0{}\text{ for a.e.~} t>0.
	\end{align}
 It is a linear system of $N$ agents in $\R^d$, indexed by $j$, that interact with a cooperative rule. The influence of agent $k$ on agent $j$ is given by the function $\ctr{j}{k}:[0,+\infty)\to \R$ that we assume to be integrable on compact intervals. We highlight that interactions are time-dependent functions that do not depend on the state. By the cooperative rule, see~\cite{smith}, we mean that all components of the Jacobian $\partial_{k}\dot x_j$ are nonnegative for $k\neq j$, thus $\ctr{j}{k}\geq0$ in case of~\eqref{E:basicsystem}.
 
 In this model, when $0\leq \ctr{j}{k}(t)\leq 1$, the idea is that the full connection is given by $\ctr{j}{k}=1$, while lower values model communication failure. For full connection, it is easy to prove that, for any initial configuration of $x_j$, the system converges to {\it consensus}: there exists a common value $x^*$ such that $\lim_{t\to+\infty}x_j(t)=x^*$ for all $j$. The main question of this article is the following:\\

\noindent {\it {\bf Question:} Which ``minimal''  {{}properties} on the $\ctr{j}{k}$ guarantee that the system converges to {\it consensus} for any initial condition?}\\

\emph{This question can be seen as a request of minimal level of service to ensure consensus}. It has been extensively studied in the community. The contributions that are closer to our approach are the following:
 \begin{itemize}
 \item {\bf Moreau condition:} In~\cite{moreau2004stability}, Moreau introduces a condition for linear systems ensuring convergence, based on defining a graph: for some fixed $\mu$, an arrow from agent $j$ to agent $k$ is built if the connection function satisfies $$\int_t^{t+T} \ctr{j}{k}(s)\,ds\geq \mu>0$$ for all $t\geq 0$ and some $T>0$. If $\ctr{j}{k}$ are bounded and the resulting graph has a node that can be reached from all other nodes, i.e.~``globally reachable'', then the system exponentially converges to consensus. Associated estimations of the rate of convergence can be found in~\cite{chowdhury2018estimation}. In~\cite{chowdhury2015consensus}, the case of second-order systems is tackled. More restrictive conditions, known as Persistent Excitation or Integral Scrambling Coefficients, are also introduced and discussed in~\cite{bonnet2024exponential, ABR-PE,ChSi2010,ChSi2014}.
 \item {\bf Cut-balance:} In~\cite{hendrickx2012convergence}, the cut-balance condition assumes that $ \int_0^T\ctr{k}{j}(t)<+\infty$ for all $T>0$ and that there exist a constant $K>0$ such that for all subsets of agents $S \subset \{1,\ldots,N\}$ and for all $t>0$ it holds 
 \begin{align*}
 \sum_{j \in S, k \notin S} \ctr{j}{k}(t) \leq K \sum_{j \in S, k \notin S} \ctr{k}{j}(t).
 \end{align*} In~\cite{shi2013role}, a generalization, known as the arc-balance condition, is introduced. In~\cite{martin2015continuous}, the result is extended to allow for non-instantaneous reciprocity. This is one of the best available results in the literature, to our knowlarrow: we compare it to our contributions in~\S~\ref{s-cutbalance}. We also recall that in~\cite{martin2013continuous,martin2015continuous} the Persistent Excitation condition and the cut-balance condition are combined.
\end{itemize} 
 
Our main theorems provide two conditions that are new with respect to the ones described above, and have weaker hypotheses with respect to many of them. Moreover, we will show that these requirements are somehow sharp, in the sense that outside the hypotheses of the theorems it is easy to find examples for which consensus is not achieved. To describe our result, we first need the following easy definition.

\begin{definition}[Globally reachable node]
A node $\ell^*$ of a graph $G$ is ``globally reachable'' if for all nodes $i$, there exists a path of arrows $i\to j_1\to \ldots \to \ell^*$.
\end{definition}
This concept was already stated in~\cite{moreau2004stability} as a key property of graphs ensuring consensus, and it ensures that the directed graph contains a directed spanning tree.

We now define the topology for the connection functions, that we explain in~\S~\ref{Ss:topology}.

\begin{definition} \label{D:conver}
{{}Let $f_{n},f:[0,+\infty)\to[0,+\infty)$ be} Lebesgue integrable in compact intervals, for $n\in\N$. We say that $f_{n}\overset{\ast}{\rightharpoonup} f$ if
\[
\lim_{n\to\infty}\int_{a}^{b}f_{n}=\int_{a}^{b}f
\qquad\text{for all bounded intervals $[a,b]\subset [0,+\infty)$.}
\]
\end{definition}

\begin{remark}\label{rem-weakstar} In the most common case, with bounded connection functions, the topology above reduces to the weak$^{*}$-topology of $L^{\infty}$ as the dual of $L^{1}$, see Lemma~\ref{L:topology}.
\end{remark}

Our first main result for the article is the following.
\begin{theorem}\label{T:asymmetric}
Let ${}\ctr{j}{k},\ctr{j}{k}^{*}:[0,+\infty)\to[0,+\infty)$ be Lebesgue integrable in compact intervals for $j,k=1,\dots,N$. {{}Let $t_n\to+\infty$ be a sequence} such that, for each $j,k=1,\dots,N$, the function $f_{n}(t):= \ctr{j}{k}(t_{n}+t)$ converges as in Definition~\ref{D:conver} to the limit function $\ctr{j}{k}^{*}$. Define the directed graph {{}$G=G(\{t_n\},\{\ctr{j}{k}^{}\})=G(\{\ctr{j}{k}^{*}\})$ where}:
\begin{itemize}
\item nodes are {{}identified with} $\{1,\dots,N\}$;
\item we draw an arrow from node $j$ to node $k$ if the following holds:
\begin{equation}\label{E:connectionAsymmetric}
\int_{t}^{+\infty}\ctr{j}{k}^{*}>0
\qquad
\forall t>0.
\end{equation}

\end{itemize}
Assume that the directed graph $G={}G(\{t_n\},\{\ctr{j}{k}^{}\})$ has a globally reachable node. Then, for all initial configurations, the solutions of~\eqref{E:basicsystem} converge to consensus.
\end{theorem}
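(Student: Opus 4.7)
The plan is to argue by contradiction, using the non-increasing diameter as a Lyapunov function and then passing to a limit along the shifts $t_n$ so as to reduce the problem to the analysis of an autonomous-like limit system governed by $\{\ctr{j}{k}^*\}$ and the graph $G$. Working componentwise, define $V(t):=\max_j x_j(t)-\min_j x_j(t)$. The standard cooperative-system argument---the agent at the maximum satisfies $\dot x_j=\sum_k \ctr{j}{k}(x_k-x_j)\le 0$, and dually for the minimum---shows that $V$ is non-increasing, so $V(t)\to V_\infty\ge 0$; I assume by contradiction that $V_\infty>0$.

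Set $y^n_j(\tau):=x_j(t_n+\tau)$. Boundedness in $n$ is inherited from the trajectories, and equicontinuity on $[0,T]$ follows from
\[
|y^n_j(\tau_2)-y^n_j(\tau_1)|\le V(0)\sum_k\int_{\tau_1}^{\tau_2}\ctr{j}{k}(t_n+s)\,ds,
\]
since the antiderivatives $\sigma^n_{j,k}(\tau):=\int_0^\tau\ctr{j}{k}(t_n+s)\,ds$ are non-decreasing in $\tau$ and, by Definition~\ref{D:conver}, converge pointwise to the continuous limit $\int_0^\tau\ctr{j}{k}^*(s)\,ds$; pointwise convergence of monotone functions to a continuous limit is automatically uniform on compacts. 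Arzel\`a--Ascoli and a diagonal extraction then yield, along a subsequence, $y^n_j\to y^*_j$ uniformly on compacts of $[0,+\infty)$. Passing to the limit in the integral form, uniform convergence of $y^n_k-y^n_j$ together with the convergence of $\ctr{j}{k}(t_n+\cdot)$ (tested against the continuous factor $y^*_k-y^*_j$, approximated by step functions on intervals, to which Definition~\ref{D:conver} applies) gives $\dot y^*_j=\sum_k\ctr{j}{k}^*(y^*_k-y^*_j)$ a.e. Moreover, since $V(x(\cdot))\to V_\infty$, one has $V(y^*(\tau))\equiv V_\infty$; equivalently $M:=\max_j y^*_j(\tau)$ and $m:=\min_j y^*_j(\tau)$ are constant in $\tau$ with $M-m=V_\infty>0$.

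The crux is to derive a contradiction from this rigidity of the limit system. Introduce $g_j:=M-y^*_j\ge 0$, which satisfies the same linear ODE $\dot g_j=\sum_k\ctr{j}{k}^*(g_k-g_j)$, with $\min_j g_j\equiv 0$ and $\max_j g_j\equiv V_\infty$. At any time $\tau$ where $\operatorname*{argmin}_\ell g_\ell(\tau)$ is the singleton $\{j\}$, Danskin's formula forces $\dot g_j(\tau)=0$, and the sign structure $\ctr{j}{k}^*\ge 0$, $g_k-g_j\ge 0$ in the ODE then forces $\ctr{j}{k}^*(\tau)=0$ for every $k\notin\operatorname*{argmin}_\ell g_\ell(\tau)$. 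Thus the minimum-attainment set is ``closed'' along $G$-arrows emanating from any unique argmin. Combining this pointwise constraint with the integral positivity $\int_T^\infty\ctr{j}{k}^*>0$ along arrows of $G$ and with the existence of a directed path $i\to j_1\to\cdots\to\ell^*$ from every $i$ to the globally reachable node $\ell^*$, I expect to propagate the minimum-attainment property inductively along such paths, forcing $g_i$ to vanish on sets of positive measure in every $[T,+\infty)$ for all $i$, in contradiction with $\max_j g_j\equiv V_\infty>0$.

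The main obstacle is precisely this last inductive step: turning the non-uniform lower bound $\int_T^\infty \ctr{j}{k}^*>0$ into a quantitative propagation of the minimum-attainment set along paths of $G$, while controlling the possibility that the argmin/argmax of the limit system jumps among agents in time and is not always a singleton. This is exactly the point where the present hypothesis is genuinely weaker than Moreau's uniform lower bound, and a further sub-limit of the trajectories $\{y^*_j\}$ along carefully chosen times is likely needed in order to restore, for the limit-of-limit system, a Moreau-type uniform positivity along each path of $G$.
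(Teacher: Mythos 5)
Your overall architecture coincides with the paper's: reduce to $d=1$, use contractivity of the support, pass to a limit trajectory along the shifts $t_n$ (your Arzel\`a--Ascoli argument via the uniformly converging monotone antiderivatives is an acceptable substitute for the paper's appeal to continuity of the input-to-trajectory map), and conclude that the limit trajectory has constant maximum $M$ and minimum $m$ with $M-m=V_\infty$. The gap is exactly where you place it: you do not close the argument on this rigid limit system, and the route you sketch would not work as stated. The Danskin-type constraint only bites at times where the argmin is (locally) a singleton, so it gives no control on a set of times of positive measure in general; and your inductive target --- forcing $g_i$ to vanish for \emph{all} $i$ --- is both stronger than needed and not what global reachability delivers, since that hypothesis only provides, from each node, a path \emph{into} $\ell^*$, so extremal-attainment can only be propagated toward $\ell^*$, not to every agent.

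The missing idea is a monotonicity statement for the set of extremal agents (Lemma~\ref{L:belowmaximumpreserved}): if the maximum $M$ of the limit trajectory is constant and $y^*_j(T)<M$ for some $T$, then Gronwall applied to $f:=M-y^*_j$, which satisfies $\dot f\ge -\bigl(\sum_k\ctr{j}{k}^*\bigr)f$, gives $f(t)>0$ for all $t\ge T$. Hence the set $I_+(t)$ of agents attaining the maximum is non-increasing in time, and being finite and nonempty it stabilizes: some index $j_1$ has $y^*_{j_1}\equiv M$. Take the path $j_1\to\cdots\to\ell^*$ given by global reachability. If along it some arrow $j\to k$ goes from an always-maximal $j$ to a $k$ that is not always maximal, the monotonicity gives $T$ with $y^*_k(t)<M$ for all $t\ge T$, hypothesis~\eqref{E:connectionAsymmetric} gives $S>T$ with $\int_T^S\ctr{j}{k}^*>0$, and integrating the equation for the constant agent $j$ yields $0=y^*_j(S)-y^*_j(T)=\sum_{\ell}\int_T^S\ctr{j}{\ell}^*\,(y^*_\ell-M)\le \bigl(\max_{[T,S]}y^*_k-M\bigr)\int_T^S\ctr{j}{k}^*<0$, a contradiction. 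Therefore $\ell^*$ is always at the maximum; the symmetric argument shows $\ell^*$ is always at the minimum, whence $M=m$ and $V_\infty=0$. In particular no further sub-limit and no restored Moreau-type uniform positivity are needed: the non-uniform condition $\int_t^{+\infty}\ctr{j}{k}^*>0$ for every $t$ is invoked exactly once per arrow of the path, after the time $T$ supplied by the monotonicity lemma.
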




We discuss and prove this first result in~\S~\ref{s-main} {and {}\S~\ref{S:ex} contains many examples.}
Via the following, simpler but more restrictive, corollary, we already show that the condition in Theorem~\ref{T:asymmetric} is much weaker than the Moreau condition~\cite{moreau2004stability}. See a more detailed comparison in~\S~\ref{s-PE}.

\begin{corollary}\label{c-1} 
{{}Let $\ctr{j}{k}:[0,+\infty)\to[0,+\infty)$ be} Lebesgue measurable and boun\-ded, for $j,k=1,\dots,N$. Define the directed graph {{}$G=G(\{\ctr{j}{k}\})$ where}:
\begin{itemize}
\item nodes are {{}identified with} $\{1,\dots,N\}$;
\item we draw an arrow from node $j$ to node $k$ if one of the following (equivalent) {{}properties} hold:
\begin{Aenum}
\item \label{item:1} 
$\displaystyle{\limsup_{T\to+\infty}\,\liminf_{t\to+\infty}\int_{t}^{t+T}\ctr{j}{k}>0}$.
\item \label{item:2} There exist $T,\mu>0$ such that for all $t\geq 0$ it holds
$\displaystyle{\int_{t}^{t+T} \ctr{j}{k}\geq \mu
}$.
\item \label{item:3} There exist $T,\mu>0$ and a sequence $t_n\to+\infty$ with $\{ {t_{n+1}}-t_{n}\}_{n\in\N}$ bounded such that
$\displaystyle{
\int_{t_n}^{t_n+T} \ctr{j}{k}\geq\mu}$ for all $ n\in\N$ {{} and all $j,k=1,\dots,N$}.
\end{Aenum}
\end{itemize}
Assume that the directed graph $G={}G(\{\ctr{j}{k}\})$ has a globally reachable node. Then, for all initial configurations, solutions of~\eqref{E:basicsystem} converge to consensus.
\end{corollary}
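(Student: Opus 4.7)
The approach is to first verify the equivalence (A) $\Leftrightarrow$ (B) $\Leftrightarrow$ (C), and then reduce the statement to Theorem \ref{T:asymmetric} by a weak-$\ast$ compactness argument enabled by the boundedness of the $\ctr{j}{k}$.

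For the equivalences, (B) $\Rightarrow$ (C) follows by choosing $t_n := nT$ with constant spacing, while (C) $\Rightarrow$ (A) is obtained by covering each large $t$ with a nearby $t_n$ using the bounded-gap property: the least $n$ with $t_n \geq t$ satisfies $t_n \leq t + M$, so $\int_t^{t+T+M}\ctr{j}{k}\geq\int_{t_n}^{t_n+T}\ctr{j}{k}\geq\mu$, and one passes to $\liminf_t$. The more substantive direction is (A) $\Rightarrow$ (B): using that $T\mapsto\liminf_{t\to\infty}\int_t^{t+T}\ctr{j}{k}$ is monotone non-decreasing (since $\ctr{j}{k}\geq 0$), (A) produces $T_0,\mu_0>0$ and $t_0\geq 0$ with $\int_t^{t+T_0}\ctr{j}{k}\geq \mu_0$ for every $t\geq t_0$; the shift $\int_t^{t+t_0+T_0}\ctr{j}{k}\geq \int_{t+t_0}^{t+t_0+T_0}\ctr{j}{k}\geq \mu_0$ then promotes this to the uniform bound (B) with $T:=t_0+T_0$.

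Armed with (B), I choose $T,\mu>0$ uniform across the finitely many edges of the corollary's graph, and take any sequence $t_n\to+\infty$, say $t_n:=n$. The shifted functions $f_n^{jk}(t):=\ctr{j}{k}(t_n+t)$ are uniformly bounded in $L^\infty([0,+\infty))$ by hypothesis, so Banach-Alaoglu combined with a diagonal extraction over the $N^2$ pairs produces a subsequence along which each $f_n^{jk}$ converges weak-$\ast$ in $L^\infty$ to a non-negative limit $\ctr{j}{k}^*$; by Remark \ref{rem-weakstar} this is exactly the convergence of Definition \ref{D:conver}.

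Finally I verify that the graph $G(\{\ctr{j}{k}^*\})$ of Theorem \ref{T:asymmetric} contains every edge of the corollary's graph. For each such edge $j\to k$, uniformity in (B) gives $\int_s^{s+T}f_n^{jk}=\int_{t_n+s}^{t_n+s+T}\ctr{j}{k}\geq \mu$ for every $s\geq 0$ and every $n$; passing to the weak-$\ast$ limit preserves this inequality, so $\int_s^{s+T}\ctr{j}{k}^*\geq \mu$ for every $s\geq 0$, and in particular $\int_t^{+\infty}\ctr{j}{k}^*\geq\mu>0$ for every $t>0$. Since graph inclusion preserves globally reachable nodes, the hypotheses of Theorem \ref{T:asymmetric} hold along this subsequence and the corollary follows. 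I expect the main delicate point to be the (A) $\Rightarrow$ (B) upgrade from an asymptotic to a uniform-in-$t$ statement, since this uniformity is precisely what guarantees that the weak-$\ast$ limit satisfies $\int_t^{+\infty}\ctr{j}{k}^*>0$ for \emph{every} $t>0$ and not only for $t=0$.
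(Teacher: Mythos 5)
Your proposal is correct and follows essentially the same route as the paper: it establishes the equivalence of (A)--(C) by the same elementary interval-shifting arguments, then uses boundedness, Banach--Alaoglu and a diagonal extraction to produce weak$^*$ limits $\ctr{j}{k}^*$, verifies $\int_t^{+\infty}\ctr{j}{k}^*\geq\mu>0$ by testing the uniform bound (B) against indicator functions, and concludes via Theorem~\ref{T:asymmetric} since graph inclusion preserves globally reachable nodes. This is precisely the content of Lemma~\ref{L:equivalentConditions} (including its item~\ref{item-D}) combined with the paper's proof of Corollary~\ref{c-1}, and your identification of the (A)$\Rightarrow$(B) upgrade as the point where uniformity in $t$ is won matches the paper's emphasis.
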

{{}
The equivalence of properties \ref{item:1}-\ref{item:2}-\ref{item:3} is proved in Lemma~\ref{L:equivalentConditions} below.
We observe the following interesting phenomenon, which is one of the key sharpness results of our article: Example~\ref{ex-basic2} below shows a case in which {{}property (C) fails just because} $t_{n+1}-t_n$ slowly grows like $\log(n)$, thus is not bounded, and consensus is not achieved.}

\begin{remark}[Sufficient number of connections]\label{rem-BA}
Suppose that the connection functions $ \ctr{j}{k}$ are all bounded.
Suppose, for a suitable sequence $t_n$, one draws enough arrow with {{}property}~\eqref{E:connectionAsymmetric} \emph{only} to establish that a node in the directed graph $G$ is globally reachable.
Then, nothing more has to be done to apply Theorem~\ref{T:asymmetric}: for a suitable subsequence $t_{n_i}$, due to Remark~\ref{rem-weakstar} and by the Banach-Alaoglu theorem, also the remaining coefficients $ \ctr{j}{k}$  automatically converge to some limit functions $ \ctr{j}{k}^*$ (due to boundedness). Whether these remaining limit functions $ \ctr{j}{k}^*$ satisfy~\eqref{E:connectionAsymmetric} or not will play no role, since the existence of a globally reachable node is already established, see Remark~\ref{R:eccoloqui} below.
\end{remark} 

The second main result of this article is stated similarly to Theorem~\ref{T:asymmetric}, but the request on the graph $G$ is different. It is as follows:
\begin{theorem}\label{T:asymmetricBis}
Let ${}\ctr{j}{k},\ctr{j}{k}^{*}:[0,+\infty)\to[0,+\infty)$ be Lebesgue integrable in compact intervals, for $j,k=1,\dots,N$. Let $t_n\to+\infty$ be such that, for $j,k=1,\dots,N$, the sequence of functions $f_{n}(t):=  \ctr{j}{k}(t_{n}+t)$ converges as in Definition~\ref{D:conver} to the limit function $\ctr{j}{k}^{*}$. {{}Construct the directed graph $G={}G(\{t_n\},\{\ctr{j}{k}^{}\})=G(\{\ctr{j}{k}^{*}\})$ where}:
\begin{itemize}
\item nodes are {{}identified with} $\{1,\dots,N\}$;
\item we draw an arrow from node $j$ to node $k$ if the following holds: 
\begin{equation}\label{E:connectionAsymmetric2}
\int_{0}^{+\infty}\ctr{j}{k}^{*}>0
\,.
\end{equation}
\end{itemize}
Assume that for each pair $j,k$ there exists at least one arrow from node $j$ to node $k$ or from $k$ to $j$. Then, for all initial configurations, solutions of~\eqref{E:basicsystem} converge to consensus.
\end{theorem}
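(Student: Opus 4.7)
The plan is to argue by contradiction, reusing the compactness scheme that (in the proof of Theorem~\ref{T:asymmetric}) extracts a limit trajectory from the shifted dynamics. Suppose that the original system does not reach consensus. Since $\max_j x_j$ and $\min_j x_j$ are respectively non-increasing and non-decreasing along \eqref{E:basicsystem}---each agent being pulled toward the convex hull of the others---the limits $M$ and $m$ exist with $M>m$. Equi-integrability of the weakly converging family $\{\ctr{j}{k}(t_n+\cdot)\}_n$ yields equicontinuity of the shifted trajectories $\{x(t_n+\cdot)\}_n$, so Ascoli--Arzel\`a furnishes, along a subsequence, a limit $y^*$ with $x(t_n+\cdot)\to y^*$ uniformly on compact intervals. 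Pairing this uniform convergence with the weak convergence of Definition~\ref{D:conver} on the weights, one passes to the limit in the integral form of \eqref{E:basicsystem} and obtains that $y^*$ solves $\dot y^*_j=\sum_k \ctr{j}{k}^*(y^*_k-y^*_j)$. By construction the limit inherits the identities $\max_j y^*_j(t)\equiv M$ and $\min_j y^*_j(t)\equiv m$ for every $t\ge 0$.

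The key structural step is a monotonicity property of the level sets $A(t):=\{j:y^*_j(t)=M\}$ and $B(t):=\{j:y^*_j(t)=m\}$, both non-empty for every $t$: I claim that $t\mapsto A(t)$ is non-increasing in the set-inclusion order. Using $y^*_k\le M$ one obtains the one-sided estimate $\dot y^*_j\le\bigl(\sum_k \ctr{j}{k}^*\bigr)(M-y^*_j)$, and a Gr\"onwall-type argument then shows that $M-y^*_j$ cannot vanish at a later time if it was strictly positive earlier, since the total weight $\sum_k\ctr{j}{k}^*$ is locally integrable. Hence $A(t_2)\subseteq A(t_1)$ whenever $t_2\ge t_1$. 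Since $A(t)$ takes values in the finite lattice of subsets of $\{1,\dots,N\}$, it stabilizes to a non-empty set $A_\infty:=\bigcap_{t\ge 0}A(t)$; the symmetric argument yields a non-empty $B_\infty$, and $A_\infty\cap B_\infty=\emptyset$ because $M>m$.

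The contradiction is then immediate. Every $j\in A_\infty$ satisfies $y^*_j(t)=M$ for all $t\ge 0$, so $\dot y^*_j=0$ almost everywhere, and writing $0=\sum_k \ctr{j}{k}^*(y^*_k-M)$ as a sum of non-positive terms forces each summand to vanish a.e. For any $k\in B_\infty$, where $y^*_k\equiv m<M$, this gives $\ctr{j}{k}^*=0$ a.e.~on $[0,+\infty)$ and hence $\int_0^{+\infty}\ctr{j}{k}^*=0$; the mirror argument at the minimum yields $\int_0^{+\infty}\ctr{k}{j}^*=0$ as well. Thus $G$ contains no arrow in either direction between $j\in A_\infty$ and $k\in B_\infty$, contradicting the hypothesis that at least one such arrow exists for every pair. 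I expect the most delicate point to be the passage to the weak limit in the integral equation---in particular the equi-absolute-continuity of $t\mapsto\int_0^t\ctr{j}{k}(t_n+s)\,ds$ required to apply Ascoli--Arzel\`a---but this should already be supplied by the infrastructure developed for Theorem~\ref{T:asymmetric}, and the remainder is a short finite combinatorial closure.
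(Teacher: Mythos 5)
Your proposal is correct and follows essentially the same route as the paper: extract a limit trajectory along the shifts by $t_n$, show its maximum and minimum are constant, use the monotonicity of the sets of maximizers/minimizers (the paper's Lemma~\ref{L:belowmaximumpreserved}) to find indices staying at the extremes for all time, and conclude that the corresponding limit weights $\ctr{j}{k}^{*}$, $\ctr{k}{j}^{*}$ vanish identically, contradicting the arrow hypothesis (the paper's Lemma~\ref{L:constantevolutionBis}, which integrates $\dot x_j$ over $[0,+\infty)$ where you argue that each nonpositive summand vanishes a.e.). The only implementation difference is that you build the limit trajectory via Ascoli--Arzel\`a plus passage to the limit in the integral equation, whereas the paper invokes continuity of the input-output map $\Phi$ under weak$^{*}$ convergence of the controls; the equi-absolute-continuity you flag is indeed available, since the nondecreasing primitives $t\mapsto\int_0^t\ctr{j}{k}(t_n+s)\,ds$ converge pointwise to a continuous limit and hence uniformly on compacts.
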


Observe that, in this case, the direction of arrows plays no role. On the opposite, a very large number of connections is required; nevertheless, connections are easier to establish, since we just require that the limiting function is non-vanishing.

\begin{remark}\label{R:general}
 Even though the dynamics in~\eqref{E:basicsystem} is chosen to be linear in the state variables, all our results can be restated for nonlinear systems of the form
  \begin{equation}\label{E:non-lin}
	\dot x_{j}=\sum_{k=1}^{N} \ctr{j}{k}(t,x)(x_{k}-x_{j})\qquad\qquad 
	j=1,\dots,N\,,
	\end{equation}
 where $\ctr{j}{k}$ are bounded and $\ctr{j}{k}(t,x)\geq \ctr{j}{k}^-(t)$, for functions ${\ctr{j}{k}^-}$ that satisfy the hypotheses of our theorems.
 We provide details in Propositions~\ref{p:non-lin}-~\ref{p:non-lin2} below.
\end{remark}

%

The structure of the article is as follows:
\begin{itemize}
\item[\S~\ref{S:cooperativeMAS}:] We state general results about systems of the form~\eqref{E:basicsystem}.
\item[\S~\ref{s-main}:] We prove Theorem~\ref{T:asymmetric}, Corollary~\ref{c-1} and Theorem~\ref{T:asymmetricBis}.
\item[\S~\ref{S:exComp}:] We compare our results with the literature. Several examples show that our conditions are new and either more general or transversal to the known ones.
\end{itemize} 

\section{Cooperative multi-agent systems}\label{S:cooperativeMAS}

In this section, we describe some general properties of cooperative multi-agent systems. In this article, we only deal with one-to-one interactions, but we consider possible communication failure {{}in the following sense: we provide conditions ensuring convergence even when many agents can stop communicating for large intervals of time}. In wide generality, we study systems of the following form: 
\begin{equation}
\label{E:nonlin}
	\dot x_{j}=\sum_{k=1}^{N} \ctr{j}{k}(t)\phi\left(x_{k}-x_{j}\right) (x_{k}-x_{j}),\qquad
	 \ j=1,\dots,N\,,
 \end{equation}
 where $\ctr{j}{k}\in \lin$ and $\phi$ is nonnegative, bounded and Lipschitz continuous. We denoted by $\lin$ the functional space
 \begin{equation}
 \label{linloc}\left\{f:\R^+\to[0,+\infty)\text{ Lebesgue measurable with $\int_0^T f<+\infty$ when $T>0$}\right\}.
 \end{equation}
 This ensures existence, globally in time, and uniqueness for the solution to the associated Cauchy problem, i.e.~when an initial condition $(x_1(0),\ldots, x_N(0))$ is fixed, see e.g.~\cite{filippov}. Solutions are considered in the Carathéodory sense for the rest of the article: trajectories are absolutely continuous functions and~\eqref{E:nonlin} holds at almost every time. 
 General results on cooperative systems can also be found in~\cite{smith}.
Now: 
\begin{itemize}
\item[\S~\ref{S:reduction}:] We reduce to the case of $1$-dimensional, linear systems.
\item[\S~\ref{S:general}:] We remind that the convex hull of positions is weakly contractive in time, and we {{}discuss monotonicity of the set of agents attaining extremal values}.
\item[\S~\ref{Ss:topology}:]We better explain the topology involved in our sufficient conditions.
\end{itemize}

\subsection{Reduction to 1-dimensional linear systems}\label{S:reduction}
 In our article, we study convergence to consensus for~\eqref{E:nonlin} by considering all possible connection functions $\ctr{j}{k}(t)$, under the assumption that they are integrable on compact intervals and nonnegative. As a consequence, it is not restrictive to assume that the dynamics is linear, as we stated in~\eqref{E:basicsystem} in the introduction. In fact, we have the following simple results.

 \begin{proposition}\label{p:non-lin} 
 Consider a function $\phi$, bounded on compact intervals, and connections $\ctr{j}{k}\in\lin$ as in~\eqref{linloc}, for $j,k=1,\dots,N$.
 Consider any given solution $x(t)$ to~\eqref{E:nonlin} starting from a fixed initial condition $(x_1(0),\ldots, x_N(0))$. Then there exist functions $ \widetilde{\ctr{j}{k}}\in\lin$ such that $x(t)$ solves the linear system
 \begin{equation}
\label{E:lin-fix}
	\dot x_{j}=\sum_{k=1}^{N} \widetilde{\ctr{j}{k}}(t)\left(x_{k}-x_{j}\right),\qquad\qquad \ j=1,\dots,N\,.
 \end{equation}
 If $\ctr{j}{k}$ are bounded on compact intervals and $M:=\max_{[0,T]}\norm{x(t)}$, it holds \[\norm{ \widetilde{\ctr{j}{k}}}_{L^{\infty}[0,T]}\leq \norm{\ctr{j}{k}}_{L^{\infty}[0,T]}\cdot \norm{\phi}_{L^{\infty}[0,M]}\,.\]
\end{proposition}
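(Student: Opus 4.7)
The plan is to prove this by a direct \emph{freezing along the given trajectory} construction: once the solution $x(t)$ is fixed, the state-dependent factor $\phi(x_k(t)-x_j(t))$ becomes a function of $t$ alone, which I simply absorb into the connection coefficient. Concretely, I set
\[
\widetilde{\ctr{j}{k}}(t) \;:=\; \ctr{j}{k}(t)\,\phi\bigl(x_k(t)-x_j(t)\bigr),
\qquad j,k=1,\dots,N,\ t\geq 0.
\]
With this definition, for a.e.\ $t$ and every $j$, the $k$-th summand on the right of \eqref{E:lin-fix} equals $\ctr{j}{k}(t)\,\phi(x_k(t)-x_j(t))\,(x_k(t)-x_j(t))$, which is exactly the $k$-th summand on the right of \eqref{E:nonlin}. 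Thus the very same absolutely continuous curve $x(t)$ solves \eqref{E:lin-fix} in the Carathéodory sense.

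Next I verify that $\widetilde{\ctr{j}{k}}\in\lin$. Since $x(t)$ is absolutely continuous it is in particular continuous, so $t\mapsto x_k(t)-x_j(t)$ is continuous; composing with $\phi$ (continuous, or at least bounded on compact intervals evaluated on a continuous curve) yields a function that is bounded on every compact subinterval of $[0,+\infty)$. Measurability of $\widetilde{\ctr{j}{k}}$ then follows from the measurability of $\ctr{j}{k}$ and the continuity of the other factor, and local integrability is the product of an $L^1_{\mathrm{loc}}$ function with a locally bounded measurable one.

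Finally, for the $L^\infty$ bound I fix $T>0$ and $M:=\max_{[0,T]}\norm{x(t)}$. On $[0,T]$ each value $x_k(t)-x_j(t)$ ranges in a set whose norm is controlled by $M$, so $|\phi(x_k(t)-x_j(t))|\leq \norm{\phi}_{L^{\infty}[0,M]}$, and the pointwise estimate
\[
|\widetilde{\ctr{j}{k}}(t)| \;\leq\; |\ctr{j}{k}(t)|\cdot \norm{\phi}_{L^{\infty}[0,M]}
\]
gives at once $\norm{\widetilde{\ctr{j}{k}}}_{L^{\infty}[0,T]}\leq \norm{\ctr{j}{k}}_{L^{\infty}[0,T]}\cdot\norm{\phi}_{L^{\infty}[0,M]}$.

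There is really no hard step: the argument is just a reparametrization of the nonlinear system along a selected solution, and the only thing one must check carefully is that the freezing produces coefficients of the required regularity class. The one mild subtlety is that the linearisation depends on the chosen solution $x(t)$, so the statement is solution-wise and not an equivalence of systems; this is exactly why the proposition is phrased in terms of a \emph{given} initial condition and trajectory.
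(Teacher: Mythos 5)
Your proposal is correct and follows essentially the same route as the paper: both define $\widetilde{\ctr{j}{k}}(t):=\ctr{j}{k}(t)\,\phi(x_k(t)-x_j(t))$ by freezing the state-dependent factor along the given trajectory, then check local integrability and the $L^\infty$ bound via the pointwise estimate $\widetilde{\ctr{j}{k}}(t)\leq \norm{\phi}_{L^{\infty}[0,M]}\,\ctr{j}{k}(t)$. Your write-up is if anything slightly more explicit about measurability and about the solution-wise nature of the linearisation, but there is no substantive difference.
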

\begin{proof}
 Consider any given trajectory $x(t)$ of~\eqref{E:nonlin} and assume that $t$ is a time for which $x$ is differentiable. Then it clearly holds $$\dot x_j=\sum_{k=1}^{N} \ctr{j}{k}(t)\phi\left(x_{k}-x_{j}\right) (x_{k}-x_{j})=
 \sum_{k=1}^{N} \widetilde{\ctr{j}{k}}(t)\left(x_{k}-x_{j}\right),$$
 by choosing $$\widetilde{\ctr{j}{k}}(t):=\ctr{j}{k}(t)\phi\left(x_{k}(t)-x_{j}(t)\right)\,.
 $$
 Such coefficients $ \widetilde{\ctr{j}{k}}$ are integrable on compact intervals: in any interval $[0,T]$ indeed\[
 \widetilde{\ctr{j}{k}}(t)\leq C {\ctr{j}{k}}(t)\,,\quad C:=\norm{\phi}_{L^{\infty}[0,M]}\,,\quad
 M:=\norm{x}_{L^{\infty}[0,T]}.\]
 \end{proof}

 \begin{proposition}\label{p:non-lin2} 
 Let $M>0$.
 Consider functions $\ctr{j}{k}:\R^+\times\R^N\to[0,M]$, for $j,k=1,\dots,N$, that are measurable for all continuous Borel probability measures, i.e.~``universally measurable''.\footnote{{}Since universally measurable functions are closed under composition \cite[Proposition 7.44]{meas}, the measurability of $(t,x)\mapsto \ctr{j}{k}(t,x)$ is a standard condition to ensure that $t\mapsto \ctr{j}{k}(t,x(t))$ is Lebesgue measurable.
 We recall that the $\sigma$-algebra $\mathcal U$ of universally measurable sets is defined as the intersection, over all Borel probability measure $p$ on $\R^{n}$, of the $\sigma$-agebra of $p$-measurable sets and we recall that a function $f:\R^{n}\to\R$ is universally measures if $f^{-1}(I)\in\mathcal U$ for all intervals $I\subset \R$.}
 Consider any given solution $\overline x:\R^+\to\R^{N}$ to~\eqref{E:non-lin} starting from a fixed initial condition $\overline x_0=(\overline x_1(0),\ldots, \overline x_N(0))$.
 Suppose connections
 \[
 \ctr{j}{k}^-(t):=\inf\left\{\ctr{j}{k}(t,x) \ : \ \norm{x-\overline x_0}\leq M\sqrt N \cdot t\right\}
 \]
satisfy assumptions of Theorems~\ref{T:asymmetric}, or Corollary~\ref{c-1}, or Theorem~\ref{T:asymmetricBis}.
 Then the trajectory $\overline x(t)$ reaches consensus: $ \overline x(t)\to(\overline x^*,\dots,\overline x^*)$ as $t\to+\infty$, for some $\overline x^*\in\R$.
\end{proposition}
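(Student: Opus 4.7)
\emph{Plan.} The strategy is to reduce to the linear case already treated in the introduction by freezing the state along the given trajectory $\overline x$, and then to compare the resulting effective coefficients with the lower-bound coefficients $\ctr{j}{k}^-$.

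First I would set $\widetilde{\ctr{j}{k}}(t):=\ctr{j}{k}(t,\overline x(t))$. Because $\ctr{j}{k}$ is universally measurable and $\overline x$ is continuous, $\widetilde{\ctr{j}{k}}$ is Lebesgue measurable and uniformly bounded by $M$, and, by exactly the argument of Proposition~\ref{p:non-lin}, $\overline x(t)$ solves the linear system
\[
\dot{\overline x}_j=\sum_{k=1}^N\widetilde{\ctr{j}{k}}(t)\bigl(\overline x_k-\overline x_j\bigr),\qquad j=1,\dots,N.
\]
Next I would localize the trajectory: by the cooperative structure described in~\S~\ref{S:general}, the componentwise convex hull of $\overline x$ is non-expansive in time, so $\|\overline x(t)-\overline x_0\|$ remains bounded by a constant depending only on $\overline x_0$. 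In particular there exists $T_0\ge 0$ such that $\|\overline x(t)-\overline x_0\|\le M\sqrt N\,t$ for every $t\ge T_0$, whence
\[
\widetilde{\ctr{j}{k}}(t)\ \ge\ \ctr{j}{k}^-(t)\qquad\text{for every }t\ge T_0.
\]

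Along the sequence $t_n\to+\infty$ provided by the assumption on $\ctr{j}{k}^-$, I would then invoke Banach--Alaoglu, in the form of Remark~\ref{rem-weakstar}, to extract a subsequence $t_{n_i}$ along which, for all $j,k$, the shifted coefficients $\widetilde{\ctr{j}{k}}(t_{n_i}+\cdot)$ also converge in the sense of Definition~\ref{D:conver} to some limit $\widetilde{\ctr{j}{k}}^*$. Passing to the limit in the pointwise inequality above on every bounded interval gives $\widetilde{\ctr{j}{k}}^*\ge\ctr{j}{k}^{*,-}$ almost everywhere, where $\ctr{j}{k}^{*,-}$ is the limit of $\ctr{j}{k}^-(t_n+\cdot)$. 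The hypotheses of Theorem~\ref{T:asymmetric}, Theorem~\ref{T:asymmetricBis} and Corollary~\ref{c-1} are all monotone lower-bound conditions on the connection coefficients --- positivity of $\int_t^{+\infty}\ctr{j}{k}^*$ or of $\int_0^{+\infty}\ctr{j}{k}^*$, or a uniform bound $\int_t^{t+T}\ctr{j}{k}\ge\mu$ --- and are therefore inherited by $\widetilde{\ctr{j}{k}}^*$, respectively by $\widetilde{\ctr{j}{k}}$ after a harmless shift of the time origin past $T_0$ (such a shift does not affect asymptotic consensus). Thus the graph built from $\widetilde{\ctr{j}{k}}^*$ contains every arrow of the graph built from $\ctr{j}{k}^{*,-}$, so the existence of a globally reachable node (respectively the pairwise connectedness property) is preserved, and the relevant linear theorem applies to the displayed linear system, giving $\overline x(t)\to(\overline x^*,\dots,\overline x^*)$.

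The main obstacle is the coupling between the trajectory and its coefficients: the bound $\widetilde{\ctr{j}{k}}\ge\ctr{j}{k}^-$ only holds for $t\ge T_0$, where $T_0$ depends on the a priori unknown size of $\overline x$, and the sequence $t_n$ along which $\ctr{j}{k}^-$ admits a weak-star limit need not be one along which $\widetilde{\ctr{j}{k}}$ converges. These are addressed, respectively, by the uniform a priori bound on $\|\overline x(t)-\overline x_0\|$ coming from the cooperative structure and by a Banach--Alaoglu subsequence argument; both steps work precisely because the conditions of the linear theorems are \emph{one-sided} inequalities on $\ctr{j}{k}$, and so they are preserved under taking weak-star limits of larger functions.
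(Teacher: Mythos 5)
Your proposal is correct and follows essentially the same route as the paper's proof: freeze the state to get the linear coefficients $\widetilde{\ctr{j}{k}}(t)=\ctr{j}{k}(t,\overline x(t))$, compare them from below with $\ctr{j}{k}^-$, and use Banach--Alaoglu along the given sequence $t_n$ together with the monotonicity of the graph-building conditions under weak$^*$ limits. Your extra care about the inequality $\widetilde{\ctr{j}{k}}\ge\ctr{j}{k}^-$ holding only for $t\ge T_0$ (justified via contractivity of the support and harmless for the asymptotic statements) is a refinement of a step the paper asserts without comment, and does not change the argument.
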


\begin{proof}
Define 
$ \widetilde{\ctr{j}{k}}(t):={\ctr{j}{k}}(t,\overline x(t))$.
At any time $t$ when $\overline x$ is differentiable, then $\dot {\overline x}_j=\sum_{k=1}^N\widetilde{\ctr{j}{k}}(t)(\overline x_k-\overline x_j)$.
Notice that $\abs{\widetilde{\ctr{j}{k}}}\leq M$ and $\widetilde{\ctr{j}{k}}\geq \ctr{j}{k}^-$.
If ${\ctr{j}{k}^-}$ satisfies the hypothesis of Corollary~\ref{c-1}, then trivially the same holds for $\widetilde{\ctr{j}{k}}$ and we get the thesis.
Let now $t_k\to+\infty$ be a sequence of times when the connections $ t\mapsto\ctr{j}{k}^-(t_k+t)$ converge weakly$^*$ to limit functions $\ctr{j}{k}^{-*}$: consider the graph $G^-$ defined by condition~\eqref{E:connectionAsymmetric} relative to ${\ctr{j}{k}}^{-*}$.
By Banach-Alaoglu theorem, up to extracting a subsequence, $t\mapsto\widetilde{\ctr{j}{k}}(t_k+t)$ converge weakly$^*$ to limit functions $\widetilde{\ctr{j}{k}}^*$; in particular, since necessarily $\widetilde{\ctr{j}{k}}^*\geq \ctr{j}{k}^{-*}$ by properties of weak convergence, the graph $\widetilde G$ defined by condition~\eqref{E:connectionAsymmetric} relative to $\widetilde{\ctr{j}{k}}^*$ has all the arrows present in $G^-$.
By Lemma~\ref{L:topology}, we conclude that, if the coefficients ${\ctr{j}{k}}^{-}$ satisfy the assumptions of Theorem~\ref{T:asymmetric}, then also the $\widetilde{\ctr{j}{k}}$ do, reaching the thesis.
With Theorem~\ref{T:asymmetricBis} the argument is similar.
\end{proof}

Thanks to these simple results, from now on we will only consider the linear dynamics given in~\eqref{E:basicsystem}. 
We also aim to restrict ourselves to study 1-dimensional systems. This is the meaning of the following result.

\begin{proposition} \label{p-multid}
 Let $d\in\mathbb{N}$ and $v\in \R^d$. Consider a trajectory $$x(t)=(x_1(t),\ldots,x_N(t))$$ to~\eqref{E:basicsystem} with $x_j(t)\in\R^d$ starting from a fixed initial condition $(x_1(0),\ldots, x_N(0))$ and with connection functions $\ctr{j}{k}(t)$. Then, the projected trajectory $$y(t;v)=(y_1(t),\ldots,y_N(t))$$ with $y_j(t)\in \R$ defined by $y_j(t):=x_j(t)\cdot v$ is the unique solution to~\eqref{E:basicsystem} defined in $\R$ with projected initial data $y_j(0):=x_j(0)\cdot v$ and the same connection functions $\ctr{j}{k}$.

 In particular, the trajectory $x(t)$ converges to consensus if and only if, for all vectors $v\in \R^d$, the projected trajectory $y(t;v)$ converges to consensus.
\end{proposition}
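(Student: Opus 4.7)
The plan is to exploit the linearity of the system~\eqref{E:basicsystem} in the state variables, combined with the fact that the connection functions $\ctr{j}{k}(t)$ depend only on $t$ and not on $x$.

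First, I would verify that the projected curve $y(t;v)$ solves the one-dimensional system. Since $x(t)$ is absolutely continuous, so is $y_j(t) = x_j(t)\cdot v$, and at every time $t$ where each $x_j$ is differentiable we may take the dot product of~\eqref{E:basicsystem} with $v$: using linearity of the inner product,
\[
\dot y_j(t) = \dot x_j(t)\cdot v = \sum_{k=1}^{N}\ctr{j}{k}(t)\bigl((x_k(t)-x_j(t))\cdot v\bigr) = \sum_{k=1}^{N}\ctr{j}{k}(t)(y_k(t)-y_j(t)).
\]
This holds for a.e.~$t\geq 0$, so $y(t;v)$ is a Carathéodory solution of the scalar system with initial data $y_j(0)=x_j(0)\cdot v$. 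Uniqueness in $\lin$ (guaranteed as recalled after~\eqref{linloc} via~\cite{filippov}) then makes $y(t;v)$ the unique such solution.

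Second, I would address the equivalence of consensus. The forward direction is immediate: if $x_j(t)\to x^{*}$ in $\R^{d}$ for each $j$, then $y_j(t;v)=x_j(t)\cdot v \to x^{*}\cdot v$ for every $v\in\R^{d}$. For the converse, I would apply the hypothesis to the standard basis vectors $v=\base{1},\dots,\base{d}$ of $\R^{d}$. The consensus of $y(t;\base{i})$ yields scalars $x^{*}_{i}$ such that $x_j(t)\cdot \base{i}\to x^{*}_{i}$ as $t\to+\infty$, uniformly in $j$. Setting $x^{*}:=(x^{*}_{1},\dots,x^{*}_{d})$, the coordinate-wise convergence gives $x_j(t)\to x^{*}$ in $\R^{d}$ for every $j$, which is precisely consensus for $x(t)$.

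There is essentially no genuine obstacle here: the whole content is that scalar multiplication by a constant vector commutes with the (linear, state-independent) right-hand side of~\eqref{E:basicsystem}. The only point requiring a modicum of care is to note that the same connection functions $\ctr{j}{k}(t)$ are inherited by the projected system, so the hypotheses of our main theorems transfer unchanged from the multi-dimensional to the one-dimensional setting; this is what legitimises the reduction to $d=1$ that we use throughout the rest of the paper.
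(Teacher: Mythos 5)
Your proposal is correct and follows essentially the same route as the paper's proof: differentiate $y_j=x_j\cdot v$ and use linearity of the inner product to obtain the scalar system, then handle the consensus equivalence by continuity of the scalar product in one direction and by projecting onto the standard basis in the other. Your additional remarks on absolute continuity, the Carath\'eodory framework, and uniqueness are a welcome touch of extra care but do not change the argument.
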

 
 \begin{remark}{}
 One can recover $x$ from $d$ projections, writing
 $x(t)=\sum_{j=1}^{d}y(t,\base{j})\base{j}$, provided that $\base{j}\cdot\base{k}=\delta_{jk}$, $j,k=1,\dots,d$: when $\{\base{1},\dots,\base{d}\}$ is an orthonormal basis.
 \end{remark}

\begin{proof} We prove the first statement. Let $x(t)$ be a trajectory. At times $t$ for which $x$ is differentiable, by differentiating the identity $y_j(t)=x_j(t)\cdot v$, we have
\begin{equation}\label{e:connectionproj}
 \dot y_j=\sum_{k=1}^N\ctr{j}{k}(x_k(t)\cdot v-x_j(t)\cdot v)=\sum_{k=1}^N\ctr{j}{k}(y_k(t)-y_j(t)).
\end{equation}

We now prove the second statement. We first prove the first implication. Let $x(t)$ converge to a consensus, i.e.~$\lim_{t\to+\infty}x_j(t)=x^*$ for all $j=\{1,\ldots,N\}$. Let $v\in\R^d$. By continuity of the scalar product, it holds $\lim_{t\to+\infty}y_j(t)=\lim_{t\to+\infty}x_j(t)\cdot v=x^*\cdot v$ for all $j=\{1,\ldots,N\}$, thus $y(t;v)$ converges to consensus.

We now prove the reverse implication. Choose the standard basis $\base{1},\ldots,\base{d}$ of unitary vectors of $\R^d$, i.e.~$\base{\ell}=(0,\ldots,0,1,0,\ldots)$ with 1 in position $\ell$. For each $\ell=1,\ldots,d$ the variables $y_j(t)=x_j(t)\cdot \base{\ell}$ converge to consensus, i.e.~the $\ell$-th component of $x_j(t)$ converges to some $(x^\ell)^*$. Since this holds for all components, all $x_j(t)$ converge to the common vector $((x^1)^*,\ldots,(x^d)^*)$, i.e.~to consensus. 
\end{proof}

\subsection{General properties of cooperative systems}\label{S:general}
We now {{}collect} general properties of~\eqref{E:basicsystem}. Being cooperative, {{}in the time-independent case it is well known that} its support is (weakly) contractive, {{}see e.g.~\cite{contr}. We repeat the proof here for completeness in the time-dependent case, that is very similar}:

\begin{proposition} \label{p:contractive} Let $x(t)$ be a solution of~\eqref{E:basicsystem}. Define the support of the solution at time $t$ as the (closed) convex hull of the set of $x_i$ at time $t$: precisely
\begin{equation}\label{e-support}
\mathrm{supp}(x(t)):=\mathrm{conv}(\{x_i(t)\}),
\end{equation}  Then, for $0\leq t\leq s$ it holds $\mathrm{supp}(x(t))\supseteq \mathrm{supp}(x(s))$. 

In dimension $d=1$, this implies that the maximum function $x_+(t):=\max_j\{x_j(t)\}$ is non-increasing and the minimum function $x_-(t):=\min_j\{x_j(t)\}$ is non-decreasing.
\end{proposition}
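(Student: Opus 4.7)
The plan is to reduce everything to a one-dimensional statement about the maximum and the minimum, and then recover the convex-hull inclusion by the standard ``intersection of supporting half-spaces'' characterization of the convex hull.

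First, I would handle the one-dimensional case, which is the real content. Assume $d=1$ and let $x_+(t):=\max_j x_j(t)$. Since each $x_j$ is absolutely continuous on $[0,T]$ (as a Carathéodory solution of~\eqref{E:basicsystem} with connections in $\lin$) and the pointwise maximum of finitely many absolutely continuous functions is absolutely continuous, $x_+$ is absolutely continuous and hence differentiable a.e. Fix a time $t$ at which $x_+$ is differentiable and at which~\eqref{E:basicsystem} holds for every $j$, and pick any index $j^{*}$ with $x_{j^{*}}(t)=x_+(t)$. The envelope inequality $x_+(s)\geq x_{j^{*}}(s)$ for every $s$, combined with $x_+(t)=x_{j^{*}}(t)$, gives
\[
\frac{x_+(s)-x_+(t)}{s-t}\geq \frac{x_{j^{*}}(s)-x_{j^{*}}(t)}{s-t}\ \text{for }s>t,\qquad
\frac{x_+(s)-x_+(t)}{s-t}\leq \frac{x_{j^{*}}(s)-x_{j^{*}}(t)}{s-t}\ \text{for }s<t.
\]
Passing to the limit on both sides yields $\dot x_+(t)=\dot x_{j^{*}}(t)$. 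Plugging in~\eqref{E:basicsystem} and using $x_k(t)\leq x_+(t)=x_{j^{*}}(t)$ together with $\ctr{j^{*}}{k}(t)\geq 0$, I get
\[
\dot x_+(t)=\dot x_{j^{*}}(t)=\sum_{k=1}^{N}\ctr{j^{*}}{k}(t)\bigl(x_{k}(t)-x_{j^{*}}(t)\bigr)\leq 0.
\]
Since this holds a.e. and $x_+$ is absolutely continuous, $x_+$ is non-increasing. The argument for $x_-(t):=\min_j x_j(t)$ is symmetric, with all increments of the relevant sign, and gives $\dot x_-\geq 0$ a.e., hence $x_-$ is non-decreasing. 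This proves the second assertion.

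For the multidimensional support statement, I would exploit Proposition~\ref{p-multid}. Recall that the closed convex hull of a finite set equals the intersection of all closed half-spaces containing it. Fix $0\leq t_0\leq s$ and any closed half-space $H_{v,c}:=\{z\in\R^d:\langle z,v\rangle\leq c\}$ containing $\mathrm{supp}(x(t_0))$, equivalently $\langle x_j(t_0),v\rangle\leq c$ for every $j$. By Proposition~\ref{p-multid}, the projected trajectory $y_j(\tau):=\langle x_j(\tau),v\rangle$ solves a one-dimensional system of the form~\eqref{E:basicsystem} with the same connection functions $\ctr{j}{k}$. Applying the one-dimensional result just proven on $[t_0,s]$ to $y$, its maximum is non-increasing, so $\langle x_j(s),v\rangle=y_j(s)\leq \max_k y_k(t_0)\leq c$ for every $j$. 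Thus $\mathrm{supp}(x(s))\subseteq H_{v,c}$. Intersecting over all $(v,c)$ such that $H_{v,c}\supseteq \mathrm{supp}(x(t_0))$ gives $\mathrm{supp}(x(s))\subseteq \mathrm{supp}(x(t_0))$.

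The only genuinely delicate step is the differentiation of the pointwise maximum $x_+$: one must justify that at a.e.~$t$ one may pick an argmax $j^{*}$ and identify $\dot x_+(t)$ with $\dot x_{j^{*}}(t)$. I have handled this with the two-sided envelope inequality above, which circumvents any need for Danskin-type formulas and only requires differentiability of $x_+$ and of $x_{j^{*}}$ at $t$, both of which hold outside a common null set because there are only finitely many indices. Everything else is either the direct use of $\ctr{j}{k}\geq 0$ to sign $\dot x_{j^{*}}$, or the projection argument reducing from $\R^d$ to $\R$.
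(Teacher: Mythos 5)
Your proof is correct, but it runs in the opposite logical direction from the paper's. The paper works directly in $\R^d$: it notes that $\mathrm{supp}(x(t))$ is a polytope and argues that at a differentiability time each summand $\ctr{j}{k}(t)(x_k-x_j)$ points inward at a boundary point $x_j$, so $\dot x_j$ points inward and the hull cannot grow; the one-dimensional monotonicity of $x_\pm$ is then read off as a special case. You instead prove the $d=1$ statement first, by observing that $x_+=\max_j x_j$ is absolutely continuous and identifying $\dot x_+(t)$ with $\dot x_{j^*}(t)$ at an argmax index via the two-sided envelope inequality, and then recover the convex-hull inclusion by projecting onto every direction $v$ (Proposition~\ref{p-multid}) and intersecting supporting half-spaces. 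Your route is somewhat more careful on the measure-theoretic side: the paper's step from ``$\dot x_j(t)$ points inwards'' to ``$x_j(t+h)\in\mathrm{supp}(x(t))$ for small $h>0$'' is a first-order heuristic at a single a.e.-differentiability point, whereas your argument integrates an a.e. sign condition on an absolutely continuous function, which is airtight. The paper's route, in exchange, is geometrically self-contained and does not lean on the projection proposition. There is no circularity in your use of Proposition~\ref{p-multid}, since it is established independently of this result.
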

\begin{proof} First observe that $\mathrm{supp}(x(t))$ is the convex hull of a finite number of points, hence it is a closed polygon.

Let $t$ be a time in which $x(t)$ is differentiable. If $x_j(t)$ belongs to the interior of $\mathrm{supp}(x(t))$, by continuity it belongs to the interior of $\mathrm{supp}(x(t+h))$ for $h>0$ sufficiently small. Assume then that $x_j(t)$ belongs to the boundary of $\mathrm{supp}(x(t))$: each term $\ctr{j}{k}(t)\left(x_{k}-x_{j}\right)$ points inwards in the polygon, due to the fact that $x_k$ belongs to the polygon and $\ctr{j}{k}(t)$ is positive. Then, the sum of all terms, that is $\dot x_j$, points inwards. Thus, one has $x_j(t+h)\in \mathrm{supp}(x(t))$ for $h>0$ sufficiently small. By merging the two cases, one has $x_j(t+h)\in \mathrm{supp}(x(t))$ for all $j=1,\ldots,N$, hence by convexity $\mathrm{supp}(x(t+h))\subseteq \mathrm{supp}(x(t))$. This proves the first result.

The results in dimension $d=1$ directly follow, since $\mathrm{supp}(x(t))$ is an interval.
\end{proof}

The last statement in dimension $d=1$ is very strong.
We even strengthen it, as follows, when extremal values are constant.

\begin{lemma}\label{L:belowmaximumpreserved}
Consider a trajectory $x(t)$ of~\eqref{E:basicsystem} in $\R$ such that $x_{+}^{*}=\max\{x_{i}(t)\ :\ i=1,\dots,N\}$ is constant.
Then the set $I^+(t)$ of indices $i$ that realize this maximum is non-increasing in time: if $i\notin I^{+}(t)$ then $i\notin I^{+}(t+h)$ for all $h>0$.

Similarly, assume that $x_{-}^{*}=\min\{x_{i}(t)\ :\ i=1,\dots,N\}$ is constant.
Then the set $I^-(t)$ of indices $i$ that realize this minimum is non-increasing in time.
\end{lemma}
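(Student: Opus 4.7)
The plan is to use a straightforward Gr\"onwall-type estimate applied to the gap between a non-extremal agent and the constant maximum. Concretely, fix an index $i$ and a time $t_{0}$ with $i\notin I^{+}(t_{0})$, so that $\delta_{0}:=x_{+}^{*}-x_{i}(t_{0})>0$. I want to show that $x_{+}^{*}-x_{i}(t)>0$ for every $t\geq t_{0}$, which is equivalent to $i\notin I^{+}(t)$.

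First I would observe that by Proposition~\ref{p:contractive} the support is contractive, and since $x_{+}^{*}$ is assumed constant the closed interval $\mathrm{supp}(x(t))$ is contained in $(-\infty,x_{+}^{*}]$ for all $t\geq t_{0}$. Therefore $x_{k}(t)-x_{i}(t)\leq x_{+}^{*}-x_{i}(t)$ for every $k$ and every $t\geq t_{0}$. Plugging this into~\eqref{E:basicsystem}, and using $\ctr{i}{k}\geq 0$, gives at almost every $t\geq t_{0}$
\[
\dot x_{i}(t)=\sum_{k=1}^{N}\ctr{i}{k}(t)\bigl(x_{k}(t)-x_{i}(t)\bigr)\leq \alpha(t)\bigl(x_{+}^{*}-x_{i}(t)\bigr),
\qquad \alpha(t):=\sum_{k=1}^{N}\ctr{i}{k}(t).
\]
Because each $\ctr{i}{k}\in\lin$, the function $\alpha$ is in $\lin$ as well, so $A(t):=\int_{t_{0}}^{t}\alpha(s)\,ds$ is finite for every $t\geq t_{0}$.

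Next, setting $\varphi(t):=x_{+}^{*}-x_{i}(t)\geq 0$, the inequality above reads $\dot\varphi(t)\geq -\alpha(t)\varphi(t)$ a.e., so $\frac{d}{dt}\bigl(e^{A(t)}\varphi(t)\bigr)\geq 0$ as absolutely continuous functions. Integrating from $t_{0}$ to $t$ yields
\[
\varphi(t)\geq \varphi(t_{0})\,e^{-A(t)}=\delta_{0}\,e^{-A(t)}>0,
\]
so $x_{i}(t)<x_{+}^{*}$ and hence $i\notin I^{+}(t)$ for every $t\geq t_{0}$. The minimum case follows by the symmetric argument: if $x_{-}^{*}$ is constant and $i\notin I^{-}(t_{0})$, then $\psi(t):=x_{i}(t)-x_{-}^{*}\geq 0$ satisfies $\dot\psi(t)\geq -\alpha(t)\psi(t)$ a.e.\ because $x_{k}(t)-x_{i}(t)\geq x_{-}^{*}-x_{i}(t)=-\psi(t)$, and the same Gr\"onwall step gives $\psi(t)>0$ for all $t\geq t_{0}$.

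I do not foresee a serious obstacle: the only delicate point is handling the non-smoothness of the $\ctr{i}{k}$. This is entirely standard since $\alpha\in\lin$ makes $e^{A(t)}\varphi(t)$ absolutely continuous, so the Carath\'eodory differential inequality integrates to the exponential lower bound without any further care.
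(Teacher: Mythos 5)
Your proof is correct and follows essentially the same route as the paper's: both define the gap $\varphi(t)=x_{+}^{*}-x_{i}(t)$, bound $\dot\varphi\geq-\bigl(\sum_{k}\ctr{i}{k}\bigr)\varphi$ using $x_{k}\leq x_{+}^{*}$, and conclude positivity by Gr\"onwall. The only cosmetic difference is that you invoke Proposition~\ref{p:contractive} where the paper simply uses the definition of $x_{+}^{*}$ as the (constant) maximum; your extra care about absolute continuity of $e^{A(t)}\varphi(t)$ is a welcome but inessential refinement.
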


\begin{proof} 
Consider an index $j\notin I^+(T)$ for some $T\geq 0$, which means $x_j(T)<x^*_+$. Define $f(t):=x^*_+-x_j(t)$, that satisfies $f(T)>0$. Let $t$ be a point in which $x(t)$ is differentiable. By the dynamic~\eqref{E:basicsystem} it holds
\begin{align*}
\dot f(t)&=0-\sum_{k=1}^{N} \ctr{j}{k}(t)\left(x_{k}-x_{j}(t)\right)
\geq -\sum_{k=1}^{N} \ctr{j}{k}(t)\left(x^*_+-x_{j}(t)\right)=-\sum_{k=1}^{N} \ctr{j}{k}(t) f(t).
\end{align*}
In the first inequality we used that $x_{k}\leq x^{*}_{+}$, for all $k=1\,,\dots\,,N$, by the definition of $x^{*}_{+}$ as a maximum.
Gronwall lemma now ensures $$f(t)\geq f(T)\cdot\exp\left(-\int_T^t \sum_{k=1}^{N} \ctr{j}{k}(s)\,ds\right)>0.$$
By continuity, the estimate holds for all $t\geq T$, ensuring that $j\not \in I^+(t)$ for all $t\geq T$.

The statement on the minimum can be proved analogously.
\end{proof}

We will use this simple result in Lemma~\ref{L:constantevolution} below, as well as in the proofs of Theorems~\ref{T:asymmetric} and~\ref{T:asymmetricBis}. We will indeed prove all the main statements in dimension 1, then by Proposition~\ref{p-multid} they hold in any dimension.

\subsection{The weak* topologies}
\label{Ss:topology}


In this section we prove a technical lemma to better understand the topology in Definition~\ref{D:conver}. We embed nonnegative functions, integrable on compact intervals, into the space of Radon measures, with the inherited weak$^{*}$-topology. When further restricting to nonnegative bounded functions, we get the weak$^{*}$-topology of $L^{\infty}$ as the dual of $L^{1}$.

\begin{lemma}{}\label{L:topology}
For $n\in\N$, let $f_{n}, f\in\lin$, defined in~\eqref{linloc}.
Then the convergence $f_{n}\overset{\ast}{\rightharpoonup} f$ specified in Definition~\ref{D:conver} is equivalent to
\begin{itemize}
\item $f_{n}$ converges to $ f$ if for all $\varphi:\R^+\to\R$ continuous with compact support
\begin{equation}\label{E:testing}
\lim_{n\to+\infty}\int_{0}^{+\infty} \varphi f_{n}=\int_{0}^{+\infty} \varphi f\,.
\end{equation}
\end{itemize}

If $f_{n}, f$ are nonnegative and bounded on compact intervals, it is equivalent to require~\eqref{E:testing} for all $\varphi:\R^+\to\R$ having compact support and with $\int_0^{+\infty}\abs{\varphi}$ finite.

If, moreover, $f_{n}, f:\R^+\to[0,M] $ for some $M>0$, for all $n\in\N$, {{}requiring~\eqref{E:testing} for all $\varphi:\R^+\to\R$ continuous with compact support is equivalent to requiring~\eqref{E:testing} for all $\varphi:\R^+\to\R$ with $\int_0^{+\infty}\abs{\varphi}$ finite: on $L^{\infty}$, the convergence $\overset{\ast}{\rightharpoonup} $ is the weak$^{*}$-topology.}
\end{lemma}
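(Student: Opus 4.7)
The plan is to split the lemma into three essentially independent approximation arguments, one per equivalence. The most delicate piece will be the first, relating convergence of interval integrals to convergence against continuous compactly supported test functions; the other two will be routine density arguments, controlled by an $L^\infty$-bound on the $f_n$.

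For the first equivalence, in the direction ``$C_c$-testing $\Rightarrow$ Definition~\ref{D:conver}'', I would use a Portmanteau-style sandwich. For $[a,b]\subset\R^+$ and each $\delta>0$, I would construct continuous compactly supported $\varphi_\delta^\pm$ with $\varphi_\delta^- \leq \mathbbm{1}_{[a,b]} \leq \varphi_\delta^+$ and supports inside $[a-\delta,b+\delta]$. The inequalities $\int \varphi_\delta^- f_n \leq \int_a^b f_n \leq \int \varphi_\delta^+ f_n$ together with the testing hypothesis would squeeze both $\liminf$ and $\limsup$ of $\int_a^b f_n$ between $\int \varphi_\delta^- f$ and $\int \varphi_\delta^+ f$; letting $\delta\downarrow 0$ and invoking dominated convergence ($f\in L^1_{\mathrm{loc}}$) would identify both extremal limits with $\int_a^b f$. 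For the converse, I would pick $\varphi\in C_c(\R^+)$ with support in $[0,R]$ and use uniform continuity to approximate $\varphi$ in sup norm by a step function $\psi_\epsilon=\sum_{i=1}^k c_i\mathbbm{1}_{I_i}$ on disjoint $I_i\subset[0,R]$, with $\|\varphi-\psi_\epsilon\|_\infty\leq\epsilon$. A triangle estimate then yields
\[
\Bigl|\int \varphi\,(f_n-f)\Bigr|\leq \epsilon\Bigl(\int_0^R f_n + \int_0^R f\Bigr) + \sum_{i=1}^k |c_i|\,\Bigl|\int_{I_i}(f_n-f)\Bigr|.
\]
Definition~\ref{D:conver} applied to $[0,R]$ itself forces $\int_0^R f_n\to\int_0^R f<\infty$, so the first bracket is bounded in $n$; the second sum vanishes as $n\to\infty$ by hypothesis on each $I_i$. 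Taking $n\to\infty$ and then $\epsilon\downarrow 0$ would close the argument.

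The remaining two equivalences follow by density. Under the middle hypothesis (which I interpret as uniform boundedness of $\{f_n,f\}$ on each compact interval), given $\psi$ with compact support in $[0,R]$ and $\int|\psi|<\infty$, I would approximate $\psi$ in $L^1$ by $\varphi_\epsilon\in C_c(\R^+)$ and estimate $|\int(\psi-\varphi_\epsilon)(f_n-f)|\leq 2M_R\,\|\psi-\varphi_\epsilon\|_1$, where $M_R$ is the common $L^\infty$-bound on a neighbourhood of $[0,R]$. Under the global hypothesis $f_n,f\leq M$ the same argument works for arbitrary $\psi\in L^1(\R^+)$, thereby recovering the weak$^*$-topology of $L^\infty$ as the dual of $L^1$.

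The only genuine obstacle in the whole proof is the sandwich step for the first equivalence; everything else reduces to a standard three-term triangle inequality combined with a uniform $L^\infty$-bound. The one subtle point to double-check is that, in the direction ``Definition~\ref{D:conver} $\Rightarrow$ $C_c$-testing'', the uniform bound $\sup_n \int_0^R f_n<\infty$ comes for free from the hypothesis itself (via convergence on $[0,R]$), so no additional boundedness need be assumed at that stage.
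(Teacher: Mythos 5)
Your proof is correct; it differs from the paper's mainly in how the first equivalence is established. There, the paper simply invokes \cite[Theorem~1.40]{evans-gariepy} (a portmanteau-type statement for weak$^{*}$ convergence of Radon measures, applicable because the measures $f_{n}\,dt$ are nonnegative and the limit $f\,dt$ is absolutely continuous, so intervals are continuity sets). You instead prove both directions by hand: the sandwich $\varphi_{\delta}^{-}\leq\mathbbm{1}_{[a,b]}\leq\varphi_{\delta}^{+}$ combined with the sign condition $f_{n}\geq0$ and dominated convergence for one direction, and uniform approximation of $\varphi\in C_{c}$ by step functions for the other, where you correctly observe that $\sup_{n}\int_{0}^{R}f_{n}<+\infty$ comes for free from convergence of $\int_{0}^{R}f_{n}$. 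Your route buys self-containedness and makes explicit precisely the two facts the citation hides (nonnegativity of the $f_{n}$ and non-atomicity of the limit); the paper's route buys brevity. The second and third equivalences are handled essentially identically in both proofs --- a three-term triangle inequality against a $C_{c}$ (respectively compactly supported $L^{1}$) approximant, controlled by the uniform $L^{\infty}$ bound --- except that the paper reaches general $\varphi$ with $\int_{0}^{+\infty}\abs{\varphi}<+\infty$ through an intermediate truncation $\varphi\mathbbm{1}_{[0,C]}$ with tail error $M\norm{\varphi}_{L^{1}((C,+\infty))}$, whereas you approximate directly in $L^{1}(\R^{+})$; both are valid. Finally, your explicit reading of ``bounded on compact intervals'' as a bound uniform in $n$ matches what the paper's proof actually uses ($f_{n}\leq M(C)$ on $[0,C]$ for all $n$) and is indeed what the density step requires.
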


\begin{proof}%
The equivalence among Definition~\ref{D:conver} and the one in~\eqref{E:testing} follows from~\cite[Theorem~1.40]{evans-gariepy}, by regularity of Radon measures.

Suppose now additionally that $ f_{n} \leq M(C)$ and $ f \leq M(C)$ in $[0,C]$, for all $n\in\N$. 

If~\eqref{E:testing} holds for all $\varphi:\R^+\to\R$ having compact support and with $\int_0^{+\infty}\abs{\varphi}$ finite, then it trivially holds also for any $\varphi:\R^+\to\R$ continuous with compact support.

If~\eqref{E:testing} holds for any $\varphi:\R^+\to\R$ continuous with compact support, consider any $\psi:\R^+\to\R$ having compact support and with $\int_0^{+\infty}\abs{\psi}$ finite, and extend it to be $0$ on $\R^{-}$.
Let $\psi_{\varepsilon}$ be a smooth approximation in $L^{1}(\R)$ of $\psi$, having compact support in some $[0,C]$, for example by convolution, see~\cite[\S~4.2.1]{evans-gariepy}.
Then take the limsup first as $n\to+\infty$ then, as $\varepsilon\to0$, in the triangular inequality
\begin{align*}
\abs{\int_{0}^{+\infty}(f_{n}- f_{})\psi
}
&\leq
\abs{\int_{0}^{+\infty}(f_{n}- f_{})\psi_{\varepsilon }}
+
M(C)\norm{\psi -\psi_{\varepsilon}}_{L^{1}(\R)}
\end{align*}
to conclude that~\eqref{E:testing} holds also for $\psi$.

Suppose now additionally that $f_{n}$ and $f$ are uniformly bounded by $M$. If $C>0$ and $\varphi:\R^+\to\R$ has $\int_0^{+\infty}\abs{\varphi}$ finite, then $\psi_{C}=\varphi\mathbbm{1}_{[0,C]}$ has compact support: thus, by the previous step,~\eqref{E:testing} holds for $\psi_{C}$.
To conclude that~\eqref{E:testing} holds also for $\varphi$, take the limsup, first as $n\to+\infty$, then as $C\to+\infty$, in the inequality
\begin{align*}
\abs{\int_{0}^{+\infty}(f_{n}- f_{})\varphi
}
&\leq
\abs{\int_{0}^{+\infty}(f_{n}- f_{})\psi_{C }}
+
M\norm{\varphi}_{L^{1}((C,+\infty))}\,.
\end{align*}
\end{proof}

\section{Proof of main results} \label{s-main}

In this section, we focus on establishing the new sufficient conditions for consensus in~\eqref{E:basicsystem}, which constitute the main results of this paper: we prove Theorem~\ref{T:asymmetric} in \S~\ref{sec:proof1}, Corollary~\ref{c-1} in \S~\ref{s-equivalence}, Theorem~\ref{T:asymmetricBis} in \S~\ref{s-asym2}.

	\subsection{Proof of Theorem~\ref{T:asymmetric}} \label{sec:proof1} In this section, we prove Theorem~\ref{T:asymmetric}. We first prove an auxiliary lemma for the dynamics on the real line, extending Lemma~\ref{L:belowmaximumpreserved}.
 
\begin{lemma}\label{L:constantevolution}
Let $x(t)$ be a trajectory of~\eqref{E:basicsystem} in $\R$ with given connection functions $\ctr{j}{k}\in\lin$ as in~\eqref{linloc}, $j,k=1,\dots,N$. Assume that both \begin{align*}
 &
x_{+}^{*}=\max\{x_{i}(t)\ :\ i=1,\dots,N\} &&\text{and}
&&x_{-}^{*}=\min\{x_{i}(t)\ :\ i=1,\dots,N\}
\end{align*}
are constant.
Consider the graph  {{}$G=G(\{\ctr{j}{k}\})$ constructed} as follows:
\begin{itemize}
\item nodes are {{}identified with} $\{1,\dots,N\}$ and
\item we draw an arrow from node $j$ to node $k$ when
\begin{equation}\label{E:connectionAsymmetricWeak}
 \int_{t}^{+\infty}\ctr{j}{k}>0
\qquad\forall t>0
\,.
\end{equation}
\end{itemize}
Assume that the directed graph has a globally reachable node. Then it holds $x_{-}^{*}=x_{+}^{*}$.
\end{lemma}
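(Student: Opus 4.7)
The plan is to argue by contradiction: suppose $x_-^*<x_+^*$. By Lemma~\ref{L:belowmaximumpreserved}, the sets $I^+(t)$ and $I^-(t)$ of indices attaining respectively the maximum $x_+^*$ and the minimum $x_-^*$ are non-increasing in time. Since they take values in the finite family of subsets of $\{1,\dots,N\}$, they must eventually stabilize: there exist $T_0\geq 0$ and sets $I^+, I^-\subseteq\{1,\dots,N\}$ with $I^+(t)=I^+$ and $I^-(t)=I^-$ for all $t\geq T_0$. The standing assumption $x_+^*\neq x_-^*$ forces $I^+\cap I^-=\emptyset$.

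The core step will be to show that the graph $G$ contains no arrow from $I^+$ to its complement, and symmetrically no arrow from $I^-$ to its complement. Fix $j\in I^+$; then $x_j(t)\equiv x_+^*$ on $[T_0,+\infty)$, so $\dot x_j(t)=0$ at almost every such $t$, and~\eqref{E:basicsystem} yields
\begin{equation*}
0=\sum_{k=1}^{N}\ctr{j}{k}(t)\bigl(x_k(t)-x_+^*\bigr)\qquad\text{for a.e. }t\geq T_0.
\end{equation*}
Every summand is non-positive because $\ctr{j}{k}\geq 0$ and $x_k(t)\leq x_+^*$, so each summand vanishes a.e.\ on $[T_0,+\infty)$. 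For any $k\notin I^+$ one has the strict inequality $x_k(t)<x_+^*$ for all $t\geq T_0$ (since $I^+(t)=I^+$ is exactly the set of maximizers for $t\geq T_0$), and this forces $\ctr{j}{k}(t)=0$ for a.e.\ $t\geq T_0$. Consequently $\int_{T_0}^{+\infty}\ctr{j}{k}=0$, so the arrow condition~\eqref{E:connectionAsymmetricWeak} fails at $t=T_0$, and $G$ contains no arrow from $j\in I^+$ to any $k\notin I^+$. An entirely symmetric argument applied to $I^-$ gives the companion statement.

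To close the argument, I would invoke the existence of a globally reachable node $\ell^*$. Pick any $i\in I^+$ (which is nonempty, as the maximum is attained). By global reachability there is a directed path $i\to j_1\to\dots\to \ell^*$ in $G$, and by the previous step every such path starting in $I^+$ remains inside $I^+$, whence $\ell^*\in I^+$. The same reasoning applied to any $i\in I^-$ yields $\ell^*\in I^-$, contradicting $I^+\cap I^-=\emptyset$. This contradiction shows $x_+^*=x_-^*$.

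The subtlety I expect is the passage from the a.e.\ vanishing of the coefficients on $[T_0,+\infty)$ to the failure of~\eqref{E:connectionAsymmetricWeak}: the arrow condition demands positivity of $\int_t^{+\infty}\ctr{j}{k}$ for \emph{every} $t>0$, so exhibiting a single value $t=T_0$ at which the tail integral vanishes is sufficient. The argument also relies crucially on the finite-time stabilization of $I^\pm(t)$ provided by Lemma~\ref{L:belowmaximumpreserved}, because otherwise the strict gap $x_k(t)<x_+^*$ for $k\notin I^+$ might hold only on a proper subset of times, and one could not conclude that $\ctr{j}{k}$ vanishes on the whole half-line $[T_0,+\infty)$.
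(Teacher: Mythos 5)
Your proof is correct, and its overall architecture coincides with the paper's: identify the indices that remain at the maximum for all (large) times, use global reachability to drag $\ell^*$ into that set, do the same for the minimum, and conclude. The one step you execute differently is the core claim that no arrow can leave the set of permanent maximizers. The paper argues by contradiction along the path: it locates the first arrow $j_{r-1}\to j_r$ exiting $I_+^*$, uses~\eqref{E:connectionAsymmetricWeak} to find a finite window $[T,S]$ with $\int_T^S\ctr{1}{2}>0$ on which $x_2\leq\tilde x<x_+^*$, and integrates the dynamics of $x_1$ over $[T,S]$ to get $0=x_1(S)-x_1(T)<0$. You instead exploit the a.e.\ identity $0=\dot x_j(t)=\sum_k\ctr{j}{k}(t)(x_k(t)-x_+^*)$ together with the sign of each summand to deduce that $\ctr{j}{k}$ vanishes a.e.\ on $[T_0,+\infty)$ whenever $j$ is a permanent maximizer and $k$ is not; this is a slightly stronger conclusion (the tail integral is exactly zero, not merely too small) and avoids the finite-window estimate, at the cost of first stabilizing the whole set $I^+(t)$ at a finite time $T_0$ — a step the paper sidesteps by only extracting a single index that realizes the maximum for all times and invoking Lemma~\ref{L:belowmaximumpreserved} again for the endpoint $j_r$ of the exiting arrow. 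Both routes use exactly the same ingredients (Lemma~\ref{L:belowmaximumpreserved}, nonnegativity of the coefficients, and the failure of~\eqref{E:connectionAsymmetricWeak} witnessed by a single $t$), so the difference is one of bookkeeping rather than substance; your handling of the subtlety you flag at the end — that a single vanishing tail integral suffices to kill the arrow — is exactly right.
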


\begin{proof}
Consider the set $I_{+}(t)$ of indices $i$ satisfying $x_{i}(t)=x^{*}_{+}$. {{}By Lemma~\ref{L:belowmaximumpreserved}, if $h>0$ then $I_{+}(t+h) \subseteq I_{+}(t)$: as time increases, the set $I_{+}(t)$ can only get smaller or remain equal}. Since it is discrete and never empty, there is some index $j_1$ with $x_{j_1}(t)=x^{*}_{+}$ for all $t\geq0$. We denote by $I_+^*$ the set of indices that meet this condition.

By hypothesis, $G$ has a globally reachable node $\ell^{*}$ and a path $j_1\to j_2\to\ldots \to j_n=\ell^*$. We now prove that $x_{\ell^*}(t)=x^{*}_{+}$ for all $t\geq0$, i.e.~$\ell^*\in I_+^*$. By contradiction, assume that $\ell^*\not\in I_+^*$. Since $j_1\in I_+^*$, in the path $j_1\to j_2\to\ldots \to j_n=\ell^*$, there exist two consecutive elements $j_{r-1}\to j_r$ such that $j_{r-1}\in I_+^*$ and $j_{r}\not \in I_+^*$. To simplify the notation, relabel indices and assume from now on $1\in I_+^*$, $2\not \in I_+^*$ and $1\to 2$.

Since $2\not \in I_+^*$, there exists $T>0$ such that $x_2(t)<x_+^*$ for all $t\geq T$, due to Lemma~\ref{L:belowmaximumpreserved}. Moreover, the existence of the arrow $1\to 2$ given by  {{}property}~\eqref{E:connectionAsymmetricWeak} ensures that it holds $\int_T^{+\infty} \ctr{1}{2}>0$, which in turn ensures that there exists $S>0$ such that $\int_T^S \ctr{1}{2}>0$. By continuity of $x_2(t)$, set $\tilde x=\max x_2([T,S])$, so that $x_2(t)\leq \tilde x$ on $[T,S]$. We now evaluate the dynamics of $x_1$ on the time interval $[T,S]$. Recalling that $x_1(t)=x^{*}_{+}$ for all $t\in[0,+\infty)$, by~\eqref{E:basicsystem} it holds
\begin{eqnarray*}
0&=&x_1(S)-x_1(T)=\sum_{k=1}^N \int_T^S \ctr{1}{k}(t)(x_k(t)-x_1(t))\,dt
=\sum_{k=1}^N \int_T^S \ctr{1}{k}(t)(x_k(t)-x^{*}_{+})\,dt\\
&\leq&\sum_{k\neq 2} 0+\int_T^S \ctr{1}{2}(t)(x_2(t)-x^{*}_{+})\,dt
\leq \int_T^S \ctr{1}{2}(t)\,dt~\cdot~ (\tilde x-x^{*}_{+})<0.
\end{eqnarray*}
This is a contradiction. Then, it holds $x_{\ell^*}(t)=x^{*}_{+}$ for all $t\geq0$. By the same reasoning with the minimum value $x^*_-$, we see that the same index $x_{\ell^*}$ satisfies $x_{\ell^*}(t)=x^{*}_{-}$ for all $t\geq0$. This implies $x^{*}_{+}=x^{*}_{-}$, which ensures $x_j(t)=x^{*}_{+}=x^{*}_{-}$ for all indices $j=\{1,\ldots,N\}$ and times $t\geq 0$.
\end{proof}
\begin{remark} The graph $G$ built in Lemma~\ref{L:constantevolution} has more connections than the one built in Theorem~\ref{T:asymmetric}, since  {{}property}~\eqref{E:connectionAsymmetricWeak} is weaker than~\eqref{E:connectionAsymmetric}. Then, requiring connectedness of $G$ is weaker than requiring connectedness of the graph in Theorem~\ref{T:asymmetric}. This weaker requirement is complemented by requiring that minimum and maximum values are constant in time.
\end{remark}

We are now ready to prove Theorem~\ref{T:asymmetric}.

\noindent {\it Proof of Theorem~\ref{T:asymmetric}:} We first observe that Proposition~\ref{p-multid} allows us to study consensus for the case $d=1$ only: we thus prove the theorem with $x_j(t)\in\R$ from now on. The structure of the proof is as follows: we first build a limit trajectory (Step 1), then prove that such a trajectory is at consensus (Step 2). We finally prove that the original dynamics converges to consensus (Step 3).

{\bf Step 1: Construction of a limit trajectory.} Let $t_n\to+\infty$ be a sequence satisfying the hypothesis of the theorem: there is a node $\ell^{*}$ such that for all $j\in\{1,\dots,N\}$ the graph ${}G=G(\{t_n\},\{\ctr{j}{k}^{}\})$ includes a directed path from $j$ to $\ell^*$. We assume that for each pair $j,k\in\{1,\dots,N\}$ the function $t\mapsto \ctr{j}{k}(t_{n}+t)$ converges to the limit function $\ctr{j}{k}^{*}$ as in Definition~\ref{D:conver}. We remark that in the hypothesis we require convergence for all pairs $j,k$ to some $\ctr{j}{k}^{*}$, eventually not satisfying~\eqref{E:connectionAsymmetric}, not only for the pairs with an arrow in the graph: when $\ctr{j}{k}$ is bounded, such limits are granted by Remark~\ref{rem-weakstar} and Banach-Alaoglu theorem, up to subsequence.


If {{}property}~\eqref{E:connectionAsymmetric} is satisfied for a pair $(j,k)$ with the original control $\ctr{j}{k}$, then the new control $\{\ctr{j}{k}^{*}\}$ satisfies  {{}property}~\eqref{E:connectionAsymmetricWeak} given in Lemma~\ref{L:constantevolution}. This implies that for each arrow $j\to k$ in the graph {{}$G=G(\{t_n\},\{\ctr{j}{k}^{}\})$ constructed} in Theorem~\ref{T:asymmetric}, the same arrow $j\to k$ exists in the graph $G^*{}=G^*(\{\ctr{j}{k}^*\})$ defined in Lemma~\ref{L:constantevolution} with controls $\{\ctr{j}{k}^{*}\}$. As a consequence, a globally reachable node of the directed graph $G$ is a globally reachable node of $G^*$.

Recall now that the support of solutions is compact, due to Proposition~\ref{p:contractive}. By passing to a subsequence in $t_n$, which we do not relabel, we assume that for each index $j\in\{1,\ldots,N\}$ the sequence $x_j(t_{n})$ admits a limit $\tilde x_j$. We consider these limits as the initial condition for the limit trajectory.

The limit system is then defined as follows: the dynamics is~\eqref{E:basicsystem}, its initial condition is $\tilde x_j$ for $j\in\{1,\ldots,N\}$ and controls are $\ctr{j}{k}^{*}$ for $j,k\in\{1,\ldots,N\}$. We denote with $x^*(t)$ the corresponding limit trajectory for the Cauchy problem of the limit system.

{\bf Step 2: The limit trajectory is at consensus.} We now prove that the limit trajectory built in the previous step is at consensus. First fix any $T>0$ and consider the exponential map $\Phi$ on the time interval $[0,T]$: it associates initial conditions and controls to the trajectory of~\eqref{E:basicsystem} as follows

$$\Phi:\begin{cases}
\R^{N}\times L^{1}\left([0,T],[0,+\infty)^{N^{2}}\right)\to& C^{0}([0,T];\R^{N})\\
 \left( x(0),\,\{t\to\ctr{j}{k}(t))\}_{j,k=1,\dots N}\right) \mapsto& t\to x(t).
\end{cases}$$
Observe that the dynamics is affine in the connection functions $\ctr{j}{k}$. We thus endow the space of Lebesgue integrable functions $L^{1}([0,T],[0,+\infty)^{N^{2}})$ with the weak$^{*}$-topology inherited by its identification as a subspace of finite nonnegative Borel measures, by testing with continuous functions $\varphi:[0,T]\to\R$.
Since, by absolute continuity of the corresponding measures, the measure of intervals converge, this topology also provides the convergence considered in Definition~\ref{D:conver}: see Lemma~\ref{L:topology}.
Recall that the map $\Phi$ is continuous, see e.g.~\cite[Theorem~3.1]{gauthier-kupka}; observe that linearity in the control plays a crucial role here. Then, consider the sequence $x^n([0,T])$ of trajectories of the original system $x(t)$ starting at time $t_n$ with initial data $x(t_n)$ and with controls $\ctr{j}{k}([t_n,t_n+T])$. By construction, both the initial data and the controls converge, hence the sequence $x^n([0,T])=\Phi(x(t_n),\ctr{j}{k}([t_n,t_n+T]))$ converges by continuity of $\Phi$. The uniqueness of the limit in $C^{0}([0,T];\R^{N})$ implies that the limit is in fact the limit trajectory $x^*$ defined above, restricted to the time interval $[0,T]$.

We now recall that the function $x_+(t):=\max_{j} x_j(t)$ is non-increasing, due to Proposition~\ref{p:contractive}, thus it admits a limit as $t\to+\infty$. 
By construction of Step 1, for the original trajectory $\lim_{n\to +\infty} x_+(t_n)=\max_j\tilde x_j$, thus by monotonicity this value is the limit of the whole trajectory $x_+(t)$.
By continuity of the map $\Phi$ and of the maximum function, the maximum function for the limit trajectory $x_+^*(t):=\max_j x_j^*(t)$ in the time interval $[0,T]$ is the uniform limit of the maximum function $x_+(t)$ on the time intervals $[t_n,t_n+T]$. As a consequence, it holds $x_+^*(0)=x_+^*(T)=\max_j\tilde x_j$.

Observe that the identity above holds for all $T>0$. This implies that the function $x_+^*(t)$ is a constant, that we denote with $x^{**}$. The same statement can be proved for the minimum function $x_-^*(t)$. Then, the limit trajectory $x^*(t)$ satisfies all the hypotheses of Lemma~\ref{L:constantevolution}, so that it holds $x_+^*(0)=x_-^*(0)=x^{**}$. As a consequence, it holds $x_j^*(0)=x^{**}$ for all $j=\{1,\ldots,N\}$.

{\bf Step 3: The original trajectory converges to consensus.} We now prove that the original trajectory $x(t)$ converges to consensus. 
Recall that by construction in Step 1 it holds $\lim_{n\to+\infty}x_j(t_n)=x_j^*(0)$, thus by Step 2 $\lim_{n\to+\infty}x_j(t_n)=x^{**}$ independent on $j$. This implies that for all $\varepsilon>0$ there exists $n^*\in\mathbb{N}$ such that $\abs{x_j(t_{n^*})-x^{**}}<\varepsilon$ for all $j\in\{1,\ldots,N\}$. By recalling that the support is contractive, due to Proposition~\ref{p:contractive}, it also holds $\abs{x_j(t)-x^{**}}<\varepsilon$ for all $j\in\{1,\ldots,N\}$ and $t\geq t_{n^*}$. This coincides with $\lim_{t\to+\infty}x_j(t)=x^{**}$ for all $j\in\{1,\ldots,N\}$. \hfill \mbox{\rule[0pt]{1.3ex}{1.3ex}}

\subsection{Proof of Corollary~\ref{c-1}} \label{s-equivalence}

In this section, we prove Corollary~\ref{c-1}. The proof is based on proving some useful equivalent formulations connected to the hypotheses of Theorem~\ref{T:asymmetric}. This also allows to better appreciate the connections with existing conditions, including persistent excitation and integral scrumbling coefficients conditions, and the novelty of our result, see \S~\ref{S:exComp} for comparisons.

\begin{lemma}\label{L:equivalentConditions}
Let $a:\R^+\to[0,+\infty)$ be Lebesgue measurable.
The following {{}properties} are equivalent:
\begin{Aenum}
\item\label{item:A} 
$\displaystyle{\limsup_{T\to+\infty}\,\liminf_{t\to+\infty}\int_{t}^{t+T}a>0}$.
\item\label{item:B} There exist $T,\mu>0$ such that for all $t\geq 0$ it holds
\begin{align} 
 \qquad\label{E:equiv2}
\int_{t}^{t+T} a\geq \mu
\,.\end{align}
\item\label{item:C} There exist $ T,\mu>0$ and a sequence $t_n\to+\infty$ with $\{ {t_{n+1}}-t_{n}\}_{n\in\N}$ bounded such that
\begin{align}
\label{E:equiv}
\int_{t_n}^{t_n+T} a\geq\mu
\qquad\forall n\in\N\,.
\end{align}
\end{Aenum}
If $ a:\R^+\to[0,+\infty)^{d}$ is bounded and all components $a_{i}$ satisfy one of the {{}properties} above, then the following weaker  {{}property} holds:
\begin{Aenum}
\setcounter{enumi}{3}
\item \label{item-D} There is a sequence $t_{n}\to+\infty$ for which the function $t\mapsto a(t_{n}+t)$ converges as in Definition~\ref{D:conver} to $ a^{*}$ with
\begin{equation}
 \int_{t}^{+\infty}a_{i}^{*}>0
\qquad
 \qquad \forall i=1\,,\dots\,,d\,
\qquad\forall t>0
\,.
\end{equation}
\end{Aenum}
\end{lemma}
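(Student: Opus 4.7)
My plan is to establish the cyclic chain of implications $(B)\Rightarrow(C)\Rightarrow(A)\Rightarrow(B)$ and then to derive $(D)$ separately. The first implication is immediate: taking $t_n:=nT$ gives a sequence with constant (hence bounded) gaps for which \eqref{E:equiv} follows from \eqref{E:equiv2}. For $(C)\Rightarrow(A)$, if $t_{n+1}-t_n\leq M$ for all $n$, then every interval of the form $[t,t+T+M]$ with $t$ sufficiently large contains some $[t_n,t_n+T]$, so $\int_t^{t+T+M}a\geq \mu$ holds in the $\liminf$; since this gives a positive value for one window length, the $\limsup$ over $T$ in $(A)$ is positive.

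The implication that requires more care, and which I expect to be the main obstacle, is $(A)\Rightarrow(B)$. The key observation is that $T\mapsto \liminf_{t\to+\infty}\int_t^{t+T}a$ is non-decreasing, so the $\limsup$ in $(A)$ equals the supremum. Hence there exist $T_0>0$ and $\mu>0$ with $\liminf_{t\to+\infty}\int_t^{t+T_0}a\geq 2\mu$, and by definition of $\liminf$ there is some $t_0$ for which $\int_t^{t+T_0}a\geq \mu$ whenever $t\geq t_0$. The subtlety is producing a single window length $T$ that also works for $t\in[0,t_0]$: I simply enlarge the window to $T:=t_0+T_0$, so that $[t,t+T]\supset[t_0,t_0+T_0]$ for every $t\in[0,t_0]$, and nonnegativity of $a$ gives $\int_t^{t+T}a\geq \mu$ uniformly in $t\geq 0$, that is $(B)$.

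For $(D)$, using $(B)$ componentwise I obtain $T_i,\mu_i>0$ with $\int_t^{t+T_i}a_i\geq \mu_i$ for all $t\geq 0$, and setting $T^\ast:=\max_i T_i$ I may assume a common window since $a_i\geq 0$. Because $a$ is bounded, the translated sequence $\{t\mapsto a(t_n+t)\}_n$ is bounded in $L^\infty(\R^+;\R^d)$, so Lemma~\ref{L:topology} combined with Banach--Alaoglu (applied diagonally to each of the finitely many components) lets me extract a subsequence, not relabeled, with $a(t_n+\cdot)\overset{\ast}{\rightharpoonup} a^\ast$ in the sense of Definition~\ref{D:conver}. Testing against the indicator $\mathbbm{1}_{[t,t+T^\ast]}$ for $t>0$ yields
\[
\int_t^{t+T^\ast}a_i^\ast=\lim_{n\to+\infty}\int_{t_n+t}^{t_n+t+T^\ast}a_i\geq \mu_i>0,
\]
so a fortiori $\int_t^{+\infty}a_i^\ast\geq \mu_i>0$ for every $i$ and every $t>0$, proving $(D)$. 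Apart from the window-enlargement trick in $(A)\Rightarrow(B)$, the only other point needing attention is that the extracted subsequence must be common to all $d$ components, which is routine by successively refining since $d$ is finite.
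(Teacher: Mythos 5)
Your proposal is correct and follows essentially the same route as the paper: the cyclic chain of implications with the same window-enlargement trick for $(A)\Rightarrow(B)$ (your use of monotonicity of $T\mapsto\liminf_{t}\int_t^{t+T}a$ is just a cleaner phrasing of the paper's choice $T=2\max\{T_1,T_2\}$), and Banach--Alaoglu with indicator test functions plus componentwise subsequence extraction for $(D)$. No gaps.
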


\begin{proof} We first prove that Item~\ref{item:A} implies Item~\ref{item:B}. \\Set  $\ell= \limsup_{T\to+\infty}\,\liminf_{t\to+\infty}\int_{t}^{t+T}a$,
which by assumption is strictly positive.
By definition of $\ell$ as a $\limsup$, there exists $T_1>0$ with 
$\liminf_{t\to+\infty}\int_{t}^{t+T_1}a> \ell/2$.
By definition of $\liminf$ then there exists $T_2>0$ such that for all $t>T_2$ we have~\eqref{E:equiv2} with $\mu=\frac1 4\ell$ and $T=T_1$.
Choose now $T_3=\max\{T_1,T_2\}$ and observe that~\eqref{E:equiv2} is satisfied for all $t\geq T_3$ with $\mu=\frac14\ell$ and $T=T_1$. Choose now $T=2T_3$ and observe that Item~\ref{item:B} is satisfied for all $t\geq 0$ with the same $\mu$.

It is easy to prove Item~\ref{item:B} implies Item~\ref{item:C}, e.g.~by choosing $t_n=n$, $n\in\N$.

We now prove that Item~\ref{item:C} implies Item~\ref{item:A}. With no loss of generality, eventually passing to a subsequence, we assume that $t_n$ is increasing.
Set $ T_{1}= t_{1}+2\sup_{n\in \N}\{t_{n+1}-t_{n}\}$, that is finite by hypothesis. Notice that for any $T\geq T_{1}$ and any $t\geq0$ each interval $[t,t+T]$ contains some interval $[t_{n'(t)},t_{[n'+1](t)}]$ with $n'\in \N$, by construction:
thus, for all $T\geq T_1$ it also holds 
$$\liminf_{t\to+\infty}\int_{t}^{t+T}a\geq \liminf_{t\to+\infty}\int_{t_{n'(t)}}^{t_{[n'+1](t)}}a \geq \liminf_{t\to+\infty}\mu\geq \mu,$$
by monotonicity of the integral of the positive function $a$. By passing to the $\limsup$ in $T$, we have Item~\ref{item:A}.

We now prove that any of the  {{}properties} above implies Item~\ref{item-D}. We first discuss the one-dimensional case $d=1$.
We prove that Item~\ref{item:B} implies Item~\ref{item-D} when $a$ is bounded. Consider an increasing sequence $t_{n}\to+\infty$ and the corresponding sequence of translated functions $a_n:=\{t\mapsto a(t_{n}+t)\}$. It is clear that the sequence is compact in $L^\infty$ with the weak$^{*}$ topology, due to the Banach-Alaoglu theorem. By a diagonal argument we can then extract a subsequence $\{t\mapsto a(t_{n}+t)\}_{n\in\N}$ that converges to a function $a^{*}$ weakly$^{*}$ in $L^{\infty}([0,T])$, as the dual of $L^{1}([0,T])$, for all $T$, see Lemma~\ref{L:topology} for the equivalence with Definition~\ref{D:conver}. Choose the test function \(\varphi(s)=\mathbbm{1}_{[t,t+T]}(s)\). For any choice of $t>0$, we obtain Item~\ref{item-D}: by the weak$^{*}$-convergence tested with $\varphi$ and changing variable in the integral
\begin{align*}
 \int_{t}^{t+T}
\!\!\!\!\!\!\!\!\! a^{*}(s)\,ds
&=
\lim_{n\to+\infty}
\int_{t}^{t+T}
\!\!\!\!\!\!\!\!\! a(t_{n}+s)
\,ds
=
\lim_{n\to+\infty}
\int_{t_{n}+t}^{t_{n}+t+T}
\!\!\!\!\!\!\!\!\! a(s)
\,ds
\stackrel{\eqref{E:equiv2}}{\geq}\mu>0
\,.
\end{align*}

We now prove Item~\ref{item-D} for a general dimension $d>1$. First apply the proof to the first component $a_{1}$, finding a corresponding sequence $\{t^1_n\}_{n\in\N}$. Then apply the same argument to $a_{2}$, extracting a subsequence $\{t^{2}_{n_{}}\}_{n_{}\in\N}$ of $\{t^{1}_{n_{}}\}_{n_{}\in\N}$. Repeat the procedure for each component, finding a final subsequence $\{t^{d}_{n_{}}\}_{n_{}\in\N}$ for which Item~\ref{item-D} holds for all components.
\end{proof}

\begin{remark}
 It is easy to prove that Item~\ref{item-D} in Lemma~\ref{L:equivalentConditions} above is a weaker {{}property} than Items~\ref{item:A}-\ref{item:C}. Consider the sequence $t_n:=n^2$ and the $L^\infty$ function
\[a(t)=\sum_{n\in\N}\mathbbm{1}_{[n^2,n^2+n]}(t)\qquad \text{for $t\geq0$}\,,
\]
where $\mathbbm{1}_{[a,b]}$ is the indicator function of the interval.
It is clear that \(t\mapsto a(t_{n}+t)\) weakly$^{*}$ converges to $a^{*}(t)=\mathbbm{1}_{[0,+\infty)}(t)$, since each interval $[n^2,n^2+n]=[t_{n},t_{n}+n]$ in the definition of $a$ has length $n\to+\infty$. Nevertheless, observe that $$\int_{t_{n}+n}^{t_{n}+2n}a(t)\,dt=\int_{t_{n}+n}^{t_{n}+2n}0\,dt=0,$$ by observing that $n^2+n\leq n^2+2n\leq n^2+2n+1=(n+1)^2$.
This implies that \(\liminf_{t\to+\infty}\int_{t}^{t+T}a=0\) for all $T>0$, hence Item~\ref{item:A} in Lemma~\ref{L:equivalentConditions} does not hold.
\end{remark}

We are now ready to prove Corollary~\ref{c-1}.

\noindent {\it Proof of Corollary~\ref{c-1}:} {{}The proof consists in showing that Theorem~\ref{T:asymmetric} applies with $\{\ctr{j}{k}\}$ as in the statement of Corollary~\ref{c-1} and $\{\ctr{j}{k}^{*}\}$, $t_n\to+\infty$ given by Lemma~\ref{L:equivalentConditions}.

Consider indeed the following directed graphs, using that $\{\ctr{j}{k}\}$ are bounded:
\begin{itemize}
\item $G(\{\ctr{j}{k}\})$, built with one of the (equivalent) rules~\ref{item:1}-\ref{item:2}-\ref{item:3} of Corollary~\ref{c-1}.
\item $H(\{t_n\},\{\ctr{j}{k}\})$, built as in Theorem~\ref{T:asymmetric}, with $\{t_{n}\}$ given by~\ref{item-D} of Lemma~\ref{L:equivalentConditions}.
\end{itemize}

We now prove that $H$ has a globally reachable node: thus, hypotheses of Theorem~\ref{T:asymmetric} are satisfied and all solutions to~\eqref{E:basicsystem} converge to consensus..

We proved in Lemma~\ref{L:equivalentConditions} that {{}properties}~\ref{item:1}-\ref{item:2}-\ref{item:3} of Corollary~\ref{c-1} are equivalent and stronger than {{}property}~\ref{item-D} of Lemma~\ref{L:equivalentConditions}, which coincides with condition~\eqref{E:connectionAsymmetric} of Theorem~\ref{T:asymmetric}: thus graph ${}H$ contains all arrows of graph  $ G$ (and eventually some additional one). 
Since we are assuming that the directed graph $G$ has a globally reachable node, then the directed graph $H$ has a globally reachable node. \hfill \mbox{\rule[0pt]{1.3ex}{1.3ex}}}

\subsection{Proof of Theorem~\ref{T:asymmetricBis}} \label{s-asym2} In this section, we prove Theorem~\ref{T:asymmetricBis}. The proof is similar to the one for Theorem~\ref{T:asymmetric}, but replacing Lemma~\ref{L:constantevolution} with the following result.
This new lemma requires that all nodes are {{}identified with} connected in at least one direction, but connections are weaker compared to Lemma~\ref{L:constantevolution}


\begin{lemma}\label{L:constantevolutionBis}
Let $x(t)$ be a trajectory of~\eqref{E:basicsystem} in $\R$ with connection functions $\ctr{j}{k}$ in $\lin$ as in~\eqref{linloc}. Assume that both \begin{align*}
 &
x_{+}^{*}=\max\{x_{i}(t)\ :\ i=1,\dots,N\} &\text{and}
&&x_{-}^{*}=\min\{x_{i}(t)\ :\ i=1,\dots,N\}
\end{align*} are constant.
The equality $x_{-}^{*} = x_{+}^{*}$ is guaranteed {{}if} the following  {{}property} holds:
\begin{equation}\label{E:connectionAsymmetricWeakBis}
 \int_{0}^{+\infty}\ctr{j}{k}+ \int_{0}^{+\infty}\ctr{k}{j}>0
\qquad\forall j,k\in\{1\,,\dots\,,N\,.\}
\end{equation}
\end{lemma}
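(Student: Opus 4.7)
The plan is to argue by contradiction. Suppose $x_-^* < x_+^*$. Define the \emph{persistent} extremal sets
\[
I_{\max} := \bigcap_{t \geq 0} I^+(t), \qquad I_{\min} := \bigcap_{t \geq 0} I^-(t),
\]
where $I^\pm(t)$ are the level sets from Lemma~\ref{L:belowmaximumpreserved}. That lemma shows $I^+(t)$ and $I^-(t)$ are non-increasing in $t$; as each is a non-empty subset of the finite set $\{1,\dots,N\}$, the families stabilize after at most $N$ transitions, so $I_{\max}$ and $I_{\min}$ are non-empty. Moreover they are disjoint: an index in $I_{\max} \cap I_{\min}$ would satisfy $x_+^* = x_-^*$, contradicting our assumption. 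Fix arbitrary $j \in I_{\max}$ and $k \in I_{\min}$, so that $j\neq k$, $x_j(t)\equiv x_+^*$ and $x_k(t)\equiv x_-^*$ for every $t\geq 0$.

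The core observation exploits the constancy of these trajectories. Since $x_j(t) \equiv x_+^*$, the function $x_j$ is absolutely continuous with $\dot x_j(t) = 0$ at a.e.\ $t \geq 0$, so equation~\eqref{E:basicsystem} gives
\[
0 = \sum_{\ell=1}^N \ctr{j}{\ell}(t)\bigl(x_\ell(t) - x_+^*\bigr) \qquad \text{for a.e.\ } t\geq 0.
\]
All summands are non-positive (since $x_\ell(t) \leq x_+^*$ and $\ctr{j}{\ell}(t) \geq 0$), so each summand must vanish. In particular, taking $\ell = k$ and using that $x_k(t) \equiv x_-^* < x_+^*$, we obtain $\ctr{j}{k}(t) = 0$ for a.e.\ $t \geq 0$, hence $\int_0^{+\infty} \ctr{j}{k} = 0$. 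The symmetric argument applied to $k \in I_{\min}$ (all summands $\ctr{k}{\ell}(t)(x_\ell(t) - x_-^*)$ are non-negative and must each vanish, and $x_j(t) \equiv x_+^* > x_-^*$) yields $\ctr{k}{j}(t) = 0$ for a.e.\ $t \geq 0$, so $\int_0^{+\infty} \ctr{k}{j} = 0$.

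Combining the two identities gives $\int_0^{+\infty} \ctr{j}{k} + \int_0^{+\infty} \ctr{k}{j} = 0$, contradicting hypothesis~\eqref{E:connectionAsymmetricWeakBis} applied to the pair $(j,k)$. Hence $x_-^* = x_+^*$. The only subtle step is the set-theoretic bookkeeping — identifying the persistent extremal sets and verifying they are non-empty and disjoint — after which the constancy of $\dot x_j$ and $\dot x_k$ together with the symmetric undirected hypothesis immediately produces the contradiction, without needing to trace any path in a directed graph as was required for Lemma~\ref{L:constantevolution}.
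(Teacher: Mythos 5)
Your proof is correct and follows essentially the same route as the paper: both use Lemma~\ref{L:belowmaximumpreserved} to extract one index permanently at the maximum and one permanently at the minimum, and then exploit the sign of the summands in the dynamics of these two constant trajectories to force $\int_{0}^{+\infty}\ctr{j}{k}=\int_{0}^{+\infty}\ctr{k}{j}=0$ unless $x_{+}^{*}=x_{-}^{*}$. The only cosmetic differences are that you argue by contradiction and deduce the vanishing of the connections pointwise a.e., whereas the paper integrates $\dot x$ over $[0,+\infty)$ and reads off the sign of the resulting products.
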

\begin{proof}
Consider the set $I_{+}(t)$ of indices $i$ realizing $x^{*}_{+}$. By Lemma~\ref{L:belowmaximumpreserved} the set is non-increasing in time.
Since it is discrete and never empty, there is some index, that we relabel as $1$, such that $x_1(t)=x^{*}_{+}$ for all $t\geq0$. 
Similarly, there is some index, that we relabel as $2$, such that $x_2(t)=x^{*}_{-}$ for all $t\geq0$.

Since $x_1(t)=x^{*}_{+}\geq x_j\geq x^{*}_{-}=x_2(t)$ for all $t\in[0,+\infty)$ and $j\in\{1,\dots,N\}$, then
\begin{align*}
0&=\int_{0}^{+\infty}\!\!\!\!\!\!\dot x_1
=
\sum_{j=1}^{N}\int_{0}^{+\infty} \!\!\!\!\!\ctr{1}{j}(t)\left(x_j(t)-x_1(t)\right)dt
 \leq 
 \left(\int_{0}^{+\infty}\!\!\!\!\! \ctr{1}{2}(t) dt\right)\cdot \left(x_2(t)-x_1(t)\right)\leq0\,,
 \\
0&=\int_{0}^{+\infty}\!\!\!\!\!\!\dot x_2
=
\sum_{j=1}^{N}\int_{0}^{+\infty} \!\!\!\!\!\ctr{2}{j}(t)\left(x_j(t)-x_2(t)\right)dt
\geq 
 \left(\int_{0}^{+\infty}\!\!\!\!\! \ctr{2}{1}(t) dt\right)\cdot \left(x_1(t)-x_2(t)\right)\geq0\,,
\end{align*}
where we used $x_j-x_1\leq 0$ and $x_j-x_2\geq0$ to neglect terms with the suitable sign. 
Both inequalities now read as 
$$\left(\int_{0}^{+\infty} \!\!\!\!\!\!\!\!\!\ctr{1}{2}(t) dt\right)\cdot \left(x^{*}_{-}-x^{*}_{+}\right)=\left(\int_{0}^{+\infty} \!\!\!\!\!\!\!\!\!\ctr{2}{1}(t) dt\right)\cdot \left(x^{*}_{+}-x^{*}_{-}\right)=0.$$
Thus,~\eqref{E:connectionAsymmetricWeakBis} with $(j,k)=(1,2)$ ensures what wanted: $
x_2(t)-x_1(t)=x_{-}^*-x_{+}^*=0.
$
\end{proof}

We are now ready to prove Theorem~\ref{T:asymmetricBis}.

\noindent {\it Proof of Theorem~\ref{T:asymmetricBis}.} We follow the proof of Theorem~\ref{T:asymmetric}, except for a small change in Step 2.
First, Proposition~\ref{p-multid} allows us to prove the theorem in dimension $d=1$ only. Given a trajectory $x(t)$ and the sequence $t_n\to+\infty$, we build the limit trajectory $x^*(t)$ as in Step 1 of the proof of Theorem~\ref{T:asymmetric}. We then prove that the maximal $x_+^*(t)$ and minimal values $x_-^*(t)$ of the limit trajectory are constant with respect to time, as in Step 2 of the proof of Theorem~\ref{T:asymmetric}. We now use Lemma~\ref{L:constantevolutionBis} to prove that such constant values are identical $x_+^*(t)=x_-^*(t)=x^{**}$. This in turn implies that the limit trajectory is already at consensus: $x_j^*(t)=x^{**}$. We finally prove that the original trajectory converges to consensus, as in Step 3 of the proof of Theorem~\ref{T:asymmetric}. \hfill \mbox{\rule[0pt]{1.3ex}{1.3ex}}

\section{Examples and comparison with the literature}
\label{S:exComp}

In this section, we describe some relevant examples of~\eqref{E:basicsystem}, with a double aim. First,
\begin{itemize}
 \item[\S~\ref{S:ex}:] we show that removing one hypothesis of Theorem~\ref{T:asymmetric}, Corollary~\ref{c-1} or Theorem~\ref{T:asymmetricBis} easily allows us to build counterexamples.
 \end{itemize} 
Second, we explain the novelty of our result comparing it with the literature, namely
\begin{itemize}
 \item[\S~\ref{s-PE}:] we extend Moreau, persistent excitation and integral scrambling conditions,
 \item[\S~\ref{s-cutbalance}:] our sufficient conditions are transversal to the cut-balance condition. 
 \end{itemize}

\subsection{Sharpness of hypotheses}\label{S:ex}

In this section, we show that the hypotheses of both Theorem~\ref{T:asymmetric} and Corollary~\ref{c-1} cannot be dropped, via a key counterexample.

\begin{example} \label{ex-basic2} We build the example as follows: we first define a ``building block'' on a time interval $[0,\Theta_{\eta}]$, we then iterate to concatenate controls on the whole $[0,+\infty)$. 

\paragraph{\bf Building block} We consider a system of 4 particles $(x_1,x_2,x_3,x_4)$ with initial condition $(-m,-m,m,m)$ for a given $m>0$. Fix a parameter $\eta\in(0,1)$. In $[0,\Theta_\eta]$, with $\Theta_\eta:=\log\left(\frac{4}{\eta(2-\eta)}\right)$, define all controls $\ctr{j}{k}=0$, except in the following cases:
\begin{align*}
&a\text{) for } \tau \in [0,  \log \sqrt2{} ] : \makebox[6.1cm][r]{$\ctr{1}{2}(\tau),\ctr{2}{1}(\tau),\ctr{3}{4}(\tau), \ctr{4}{3}(\tau) = 1\,,$} &\\
&b\text{) for } \tau \in [  \log 2 , \log \sqrt2{} ] : \makebox[5.6cm][r]{$\ctr{2}{3}(\tau), \ctr{3}{2}(\tau) = 1\,,$} &\\
&c\text{) for } \tau \in [\log 2 ,  \log\tfrac2 \eta] : \makebox[5.8cm][r]{$\ctr{2}{1}(\tau), \ctr{3}{4}(\tau) = 1\,,$} &\\
&d\text{) for } \tau  \in [\tfrac2 \eta,  \Theta_\eta] : \makebox[6.6cm][r]{$\ctr{1}{4}(\tau),\ctr{4}{1}(\tau) = 1\,.$} &
\end{align*}

It is easy to observe the following property of the building block: given $\eta\in(0,1)$, the time interval has length $\Theta_\eta$, that is positive and satisfies $\lim_{\eta\to 0^+}\Theta_\eta=+\infty$. The trajectory of the building block satisfies the following:
\begin{itemize}
\item Up to $\tau=\log \sqrt2{} $ the activated controls play no role on the dynamics, since $x_1=x_2=-m$ and $x_3=x_4=m$.
\item At $\tau=\log 2 $ being $\dot x_1=\dot x_4=0$ in $[ \log \sqrt2{} , \log 2 ]$ it holds $x_1(\tau)=-m$ and $x_4(\tau)=m$. Since on the second time interval it holds $\dot x_2+\dot x_3=0$ and $\dot x_2-\dot x_3=-2( x_2- x_3)$, an easy computation shows $x_2(\tau)=-\frac{m}2$, $x_3(\tau)=\frac{m}2$.
\item At $\tau=\log({2}/\eta)$ it still holds $x_1(\tau)=-m$ and $x_4(\tau)=m$. Again, an easy computation, based on the fact that $\dot x_2-\dot x_1=-(x_2-x_1)$, shows that $x_2(\tau)=-\left(1-\frac\eta2\right)m$. By a symmetry argument, we also have $x_3(\tau)=\left(1-\frac\eta2\right)m$.
\item At $\tau=\Theta_\eta$, with computations similar to those in the second time interval, we now have $x_{2}(\tau)=x_1(\tau)=-\left(1-\frac\eta2\right)m$ and $x_{3}(\tau)=x_4(\tau)=\left(1-\frac\eta2\right)m$.
\end{itemize}
In summary, in time $\Theta_\eta$, for fixed $m>0$ and $\eta\in(0,1)$, the dynamics of the building blocks steers the configuration $(-m, -m,m,m)$ to the configuration \[\left(-\left(1-\frac\eta2\right)m,-\left(1-\frac\eta2\right)m,\left(1-\frac\eta2\right)m,\left(1-\frac\eta2\right)m\right).\] See a graphical description in Figure~\ref{fig-A}.

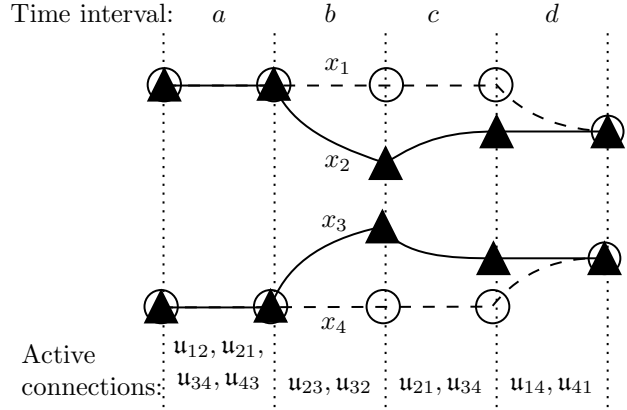
\begin{figure}[ht]
\centering
\tikzset{every picture/.style={line width=0.75pt}} 

\begin{tikzpicture}[x=0.75pt,y=0.75pt,yscale=-1,xscale=1]

\draw   (80,35.5) .. controls (80,29.7) and (84.7,25) .. (90.5,25) .. controls (96.3,25) and (101,29.7) .. (101,35.5) .. controls (101,41.3) and (96.3,46) .. (90.5,46) .. controls (84.7,46) and (80,41.3) .. (80,35.5) -- cycle ;
\draw  [fill={rgb, 255:red, 0; green, 0; blue, 0 }  ,fill opacity=1 ] (90.5,25.5) -- (100.5,45.5) -- (80.5,45.5) -- cycle ;
\draw   (149,35.5) .. controls (149,29.7) and (153.7,25) .. (159.5,25) .. controls (165.3,25) and (170,29.7) .. (170,35.5) .. controls (170,41.3) and (165.3,46) .. (159.5,46) .. controls (153.7,46) and (149,41.3) .. (149,35.5) -- cycle ;
\draw   (220,35.5) .. controls (220,29.7) and (224.7,25) .. (230.5,25) .. controls (236.3,25) and (241,29.7) .. (241,35.5) .. controls (241,41.3) and (236.3,46) .. (230.5,46) .. controls (224.7,46) and (220,41.3) .. (220,35.5) -- cycle ;
\draw   (289,35.5) .. controls (289,29.7) and (293.7,25) .. (299.5,25) .. controls (305.3,25) and (310,29.7) .. (310,35.5) .. controls (310,41.3) and (305.3,46) .. (299.5,46) .. controls (293.7,46) and (289,41.3) .. (289,35.5) -- cycle ;
\draw   (360,65) .. controls (360,59.48) and (364.48,55) .. (370,55) .. controls (375.52,55) and (380,59.48) .. (380,65) .. controls (380,70.52) and (375.52,75) .. (370,75) .. controls (364.48,75) and (360,70.52) .. (360,65) -- cycle ;
\draw  [fill={rgb, 255:red, 0; green, 0; blue, 0 }  ,fill opacity=1 ] (159.5,25) -- (169.5,45) -- (149.5,45) -- cycle ;
\draw  [color={rgb, 255:red, 0; green, 0; blue, 0 }  ,draw opacity=1 ][fill={rgb, 255:red, 0; green, 0; blue, 0 }  ,fill opacity=1 ] (230,75) -- (240,105) -- (220,105) -- cycle ;
\draw  [fill={rgb, 255:red, 0; green, 0; blue, 0 }  ,fill opacity=1 ] (300,55) -- (310,75) -- (290,75) -- cycle ;
\draw  [fill={rgb, 255:red, 0; green, 0; blue, 0 }  ,fill opacity=1 ] (370,55) -- (380,75) -- (360,75) -- cycle ;
\draw  [dash pattern={on 4.5pt off 4.5pt}]  (90.5,36) -- (299.5,36) ;
\draw    (159.5,36) .. controls (169,57) and (187,71) .. (230,85) ;
\draw    (230,85) .. controls (254,69) and (273,65) .. (300,65) ;
\draw    (90.5,36) -- (122,36) -- (159.5,36) ;
\draw    (300,65) -- (369,65) ;
\draw  [dash pattern={on 4.5pt off 4.5pt}]  (299.5,35) .. controls (317,56) and (343,64) .. (370,65) ;
\draw   (78,185.5) .. controls (78,179.7) and (82.7,175) .. (88.5,175) .. controls (94.3,175) and (99,179.7) .. (99,185.5) .. controls (99,191.3) and (94.3,196) .. (88.5,196) .. controls (82.7,196) and (78,191.3) .. (78,185.5) -- cycle ;
\draw  [fill={rgb, 255:red, 0; green, 0; blue, 0 }  ,fill opacity=1 ] (88.5,175.5) -- (98.5,195.5) -- (78.5,195.5) -- cycle ;
\draw   (147,185.5) .. controls (147,179.7) and (151.7,175) .. (157.5,175) .. controls (163.3,175) and (168,179.7) .. (168,185.5) .. controls (168,191.3) and (163.3,196) .. (157.5,196) .. controls (151.7,196) and (147,191.3) .. (147,185.5) -- cycle ;
\draw   (218,185.5) .. controls (218,179.7) and (222.7,175) .. (228.5,175) .. controls (234.3,175) and (239,179.7) .. (239,185.5) .. controls (239,191.3) and (234.3,196) .. (228.5,196) .. controls (222.7,196) and (218,191.3) .. (218,185.5) -- cycle ;
\draw   (287,185.5) .. controls (287,179.7) and (291.7,175) .. (297.5,175) .. controls (303.3,175) and (308,179.7) .. (308,185.5) .. controls (308,191.3) and (303.3,196) .. (297.5,196) .. controls (291.7,196) and (287,191.3) .. (287,185.5) -- cycle ;
\draw   (358,155) .. controls (358,149.48) and (362.48,145) .. (368,145) .. controls (373.52,145) and (378,149.48) .. (378,155) .. controls (378,160.52) and (373.52,165) .. (368,165) .. controls (362.48,165) and (358,160.52) .. (358,155) -- cycle ;
\draw  [fill={rgb, 255:red, 0; green, 0; blue, 0 }  ,fill opacity=1 ] (157.5,175) -- (167.5,195) -- (147.5,195) -- cycle ;
\draw  [color={rgb, 255:red, 0; green, 0; blue, 0 }  ,draw opacity=1 ][fill={rgb, 255:red, 0; green, 0; blue, 0 }  ,fill opacity=1 ] (228,125) -- (238,145) -- (218,145) -- cycle ;
\draw  [fill={rgb, 255:red, 0; green, 0; blue, 0 }  ,fill opacity=1 ] (298,145) -- (308,165) -- (288,165) -- cycle ;
\draw  [fill={rgb, 255:red, 0; green, 0; blue, 0 }  ,fill opacity=1 ] (368,145) -- (378,165) -- (358,165) -- cycle ;
\draw  [dash pattern={on 4.5pt off 4.5pt}]  (88.5,186) -- (297.5,186) ;
\draw    (157.5,186) .. controls (166,162) and (186,144) .. (228,135) ;
\draw    (228,135) .. controls (241,154) and (271,155) .. (298,155) ;
\draw    (88.5,186) -- (157.5,186) ;
\draw    (299,155) -- (368,155) ;
\draw  [dash pattern={on 4.5pt off 4.5pt}]  (297.5,185) .. controls (316,162) and (341,154) .. (368,155) ;
\draw  [dash pattern={on 0.84pt off 2.51pt}]  (90,13) -- (90,220) ;
\draw  [dash pattern={on 0.84pt off 2.51pt}]  (160,13) -- (160,220) ;
\draw  [dash pattern={on 0.84pt off 2.51pt}]  (230,13) -- (230,220) ;
\draw  [dash pattern={on 0.84pt off 2.51pt}]  (300,13) -- (300,220) ;
\draw  [dash pattern={on 0.84pt off 2.51pt}]  (370,13) -- (370,220) ;

\draw (200,44) node    {$x_{1}$};
\draw (200,82) node    {$x_{2}$};
\draw (198,136) node    {$x_{3}$};
\draw (197.75,180) node    {$x_{4}$};
\draw (127.22,19) node [anchor=south] [inner sep=0.75pt]    {$a$};
\draw (197.22,19) node [anchor=south] [inner sep=0.75pt]    {$b$};
\draw (263.22,19) node [anchor=south] [inner sep=0.75pt]    {$c$};
\draw (337.22,19) node [anchor=south] [inner sep=0.75pt]    {$d$};
\draw (46.73,20) node [anchor=south] [inner sep=0.75pt]   [align=left] {Time interval};
\draw (1,195) node [anchor=north west][inner sep=0.75pt]   [align=left] {Active\\connections};
\draw (97,190) node [anchor=north west][inner sep=0.75pt]    {$\begin{array}{c}\ctr{1}{2},\ctr{2}{1},\\ \ctr{3}{4},\ctr{4}{3}\end{array}$};

\draw (195,225) node [anchor=south ][inner sep=0.75pt]    {$\ctr{2}{3},\ctr{3}{2}$};
\draw (265,225) node [anchor=south ][inner sep=0.75pt]    {$\ctr{2}{1},\ctr{3}{4}$};
\draw (335,225) node [anchor=south][inner sep=0.75pt]    {$\ctr{1}{4},\ctr{4}{1}$};

\end{tikzpicture}
\caption{Example~\ref{ex-basic2}, building block.}
\label{fig-A}
\end{figure}

\paragraph{\bf Complete dynamics} Fix $m_0=1$, i.e.~start with the initial configuration $(-1,-1,1,1)$. Apply the building block dynamics by choosing the sequence $\eta_n:=2(1-\exp(-1/(n+1)^2))$ starting at $n=1$. The total length time of the time intervals up to the $n$-th buiding block is $\Theta'_n:=\sum_{j=1}^n \Theta_{\eta_j}$. Observe that the system satisfies $$x(\Theta'_n)=(-m_n,-m_n,m_n,m_n)\,,$$ with~~$m_n=m_0\Pi_{j=1}^n \left(1-\frac{\eta_j}{2}\right)=1\cdot \Pi_{j=2}^n \exp(-1/j^2)$, where $\Pi$ denotes the product of the sequence. We now prove that the system does not converge to consensus. Indeed, first observe that the concatenation of building blocks defines a trajectory on $[0,+\infty)$, since $\lim_{n\to+\infty} \Theta'_n\geq \lim_{n\to+\infty} \Theta_{\eta_n}=+\infty$, due to the fact that $\lim_{n\to+\infty}\eta_n=0$. Second, observe that it holds
\begin{eqnarray*}
\log(m_n)=-\sum_{j=2}^n 1/j^2\geq-\sum_{j=1}^{+\infty}1/j^2=-\pi^2/6.
\end{eqnarray*}
This implies $x_1(\Theta'_n)=x_2(\Theta'_n)\leq -\exp(-\pi^2/6)$ and $x_3(\Theta'_n)=x_4(\Theta'_n)\geq \exp(-\pi^2/6)$. Thus, the system does not converge to consensus.
\end{example}
\begin{figure}[ht]
\centering
\tikzset{every picture/.style={line width=0.75pt}} 

\begin{tikzpicture}[x=0.75pt,y=0.75pt,yscale=-1,xscale=1]

\draw    (70,18) -- (387,18) ;
\draw [shift={(390,18)}, rotate = 180] [fill={rgb, 255:red, 0; green, 0; blue, 0 }  ][line width=0.08]  [draw opacity=0] (10.72,-5.15) -- (0,0) -- (10.72,5.15) -- (7.12,0) -- cycle    ;
\draw    (70,18) -- (120,18) (90,14) -- (90,22)(110,14) -- (110,22) ;
\draw [shift={(70,18)}, rotate = 180] [color={rgb, 255:red, 0; green, 0; blue, 0 }  ][line width=0.75]    (0,5.59) -- (0,-5.59)   ;
\draw    (80,18) -- (160,18) (120,14) -- (120,22) ;
\draw    (160,18) -- (220,18) (180,14) -- (180,22)(200,14) -- (200,22) ;
\draw [shift={(160,18)}, rotate = 180] [color={rgb, 255:red, 0; green, 0; blue, 0 }  ][line width=0.75]    (0,5.59) -- (0,-5.59)   ;
\draw    (260,18) -- (310,18) (280,14) -- (280,22)(300,14) -- (300,22) ;
\draw [shift={(260,18)}, rotate = 180] [color={rgb, 255:red, 0; green, 0; blue, 0 }  ][line width=0.75]    (0,5.59) -- (0,-5.59)   ;
\draw    (310,18) -- (380,18) (350,14) -- (350,22) ;
\draw    (370,18) -- (380,18)  ;
\draw [shift={(370,18)}, rotate = 180] [color={rgb, 255:red, 0; green, 0; blue, 0 }  ][line width=0.75]    (0,5.59) -- (0,-5.59)   ;
\draw    (190,18) -- (270,18) (230,14) -- (230,22) ;
\draw    (70,7) -- (70,38) ;
\draw    (160,7) -- (160,38) ;
\draw    (260,7) -- (260,38) ;
\draw    (370,7) -- (370,38) ;

\draw (78.22,15) node [anchor=south] [inner sep=0.75pt]    {$a$};
\draw (101.22,16) node [anchor=south] [inner sep=0.75pt]    {$b$};
\draw (116.22,15) node [anchor=south] [inner sep=0.75pt]    {$c$};
\draw (142.22,15) node [anchor=south] [inner sep=0.75pt]    {$d$};
\draw (170.22,16) node [anchor=south] [inner sep=0.75pt]    {$a$};
\draw (191.22,16) node [anchor=south] [inner sep=0.75pt]    {$b$};
\draw (216.22,16) node [anchor=south] [inner sep=0.75pt]    {$c$};
\draw (247.22,16) node [anchor=south] [inner sep=0.75pt]    {$d$};
\draw (270.22,16) node [anchor=south] [inner sep=0.75pt]    {$a$};
\draw (291.22,16) node [anchor=south] [inner sep=0.75pt]    {$b$};
\draw (327.22,15) node [anchor=south] [inner sep=0.75pt]    {$c$};
\draw (360.22,16) node [anchor=south] [inner sep=0.75pt]    {$d$};
\draw (106,20.4) node [anchor=north west][inner sep=0.75pt]    {$\Theta _{\eta _{1}}$};
\draw (201,20.4) node [anchor=north west][inner sep=0.75pt]    {$\Theta _{\eta _{2}}$};
\draw (301,20.4) node [anchor=north west][inner sep=0.75pt]    {$\Theta _{\eta _{3}}$};

\end{tikzpicture}
\caption{Example~\ref{ex-basic2}, complete dynamics.} 
\label{fig-B}
\end{figure}

We now discuss why Theorem~\ref{T:asymmetric} and Corollary~\ref{c-1} do not apply in this example. With this goal, we build three graphs, according to different rules:
\begin{itemize}
\item The ``unbounded interaction graph'': add~$i\to j$ if $\int_0^{+\infty}a_{ij}=+\infty$. This graph has been discussed e.g.~in~\cite{moreau2004stability,moreau2005discrete}. In this case, the graph has arrows $\{12,21,23,32,34,43\}$. The directed graph then has a globally reachable node $\ell^*$, equal to $1$ or $4$. Yet, it is well-known that (several different concepts of) connectivity of such graph do not ensure convergence. It is remarkable to observe that, in case of symmetric controls (i.e.~$\ctr{k}{j}(\tau)=\ctr{j}{k}(\tau)$ for all $j,k$ and $\tau$), the connectivity of this graph is indeed a necessary and sufficient condition to ensure convergence for all initial conditions, see~\cite[Thm 1-(c)]{hendrickx2012convergence}.

\item The graph built according to Theorem~\ref{T:asymmetric}, choosing $t_n=\Theta'_n$. 
We then have that the sequences $\ctr{j}{k}(t_n+\tau)$ weakly converge to $\ctr{j}{k}^*(\tau)$ {{}constructed} as follows:
\begin{itemize}
 \item $ \ctr{1}{2}^* =\ctr{4}{3}^*=\mathbbm{1}_{[0, \log \sqrt2{} ]}$;
 \item $\ctr{2}{3}^*=\ctr{3}{2}^*=\mathbbm{1}_{[ \log \sqrt2{} ,\log 2 ]}$;
 \item $\ctr{2}{1}^*=\ctr{3}{4}^*=\mathbbm{1}_{[0, \log \sqrt2{} ]}+\mathbbm{1}_{[\log 2 ,+\infty)}$;
\end{itemize}
 all other $\ctr{j}{k}^*$ are zero.
The graph has nodes $\{21,34\}$ only: it is not connected.

\item The graph built according to Corollary~\ref{c-1}. By taking the same sequence $t_n=\Theta'_n$ of the previous case, we have that controls satisfy {{}property}~\ref{item:C} with $T=2$ and $\mu= \log \sqrt2{} $. Indeed, we have the following:
\begin{itemize}
 \item on the time interval $[t_n,t_n+1]$, for $n$ large it holds $\ctr{1}{4}(\tau)=\ctr{4}{1}(\tau)=0;$
 \item on the time interval $[t_n,t_n+ \log \sqrt{2} ]=[t_n,t_n+\mu ]$ it holds $$\int_{t_n}^{t_n+\mu}\!\!\!\!\!\!\ctr{2}{1}(\tau)\,d\tau=
 \int_{t_n}^{t_n+\mu}\!\!\!\!\!\!\ctr{1}{2}(\tau)\,d\tau=
 \int_{t_n}^{t_n+\mu}\!\!\!\!\!\!\ctr{4}{3}(\tau)\,d\tau=
 \int_{t_n}^{t_n+\mu}\!\!\!\!\!\!\ctr{3}{4}(\tau)\,d\tau=\mu;$$
 \item on the time interval $[t_n+ \log \sqrt2{} ,t_n+\log 2 ]$ it holds $$\int_{t_n+\log \sqrt2{} }^{t_n+\log 2 }\ctr{2}{3}(\tau)\,d\tau=\int_{t_n+\log \sqrt2{} }^{t_n+\log 2 }\!\!\!\!\!\!\!\!\!\!\!\!\!\!\ctr{3}{2}(\tau)\,d\tau=\log \sqrt2{} =\mu;$$
 \item by observing that $\lim_{\eta\to 0^+}-\log(\eta)=+\infty$, on the time interval 
 $[t_n+\log 2 ,t_n+1+ \log 2 ]$ it holds $$\int_{t_n+\log 2 }^{t_n+1+\log 2 }\!\!\!\!\!\!\!\!\!\!\!\!\!\!\ctr{2}{1}(\tau)\,d\tau=\int_{t_n+\log 2 }^{t_n+1+\log 2 }\!\!\!\!\!\!\!\!\!\!\!\!\!\!\ctr{3}{4}(\tau)\,d\tau=1\geq\mu.$$
\end{itemize}
Then, the graph ${{}G=G(\{\ctr{j}{k}\})}$ built according to Corollary~\ref{c-1}, if $t_{n+1}-t_n$ was bounded, has arrows $\{21,12,23,32,34,43\}$: {the directed graph has a the arrow $\ell^*$ equal to $1$} or $4$. It is strongly connected, and even symmetric.
Yet, hypotheses of the corollary are not satisfied and the system does not converge to consensus, since the sequence $t_{n+1}-t_n$ is unbounded. Indeed, it holds
\begin{align*}
 &t_{n+1}-t_n= \Theta'_{ {n+1}}-\Theta'_{ {n}}=\Theta_{\eta_{n+1}} =\log\left(\frac{2\exp\left( {1}/{(n+1)^2}\right)}{1-\exp\left(-1/(n+1)^2\right)}\right)=\\
 &\log\left(2 (n+1)^2+o(n^2)\right)=2\log\left(n\right)+o(\log\left(n\right)).
\end{align*}
This shows that that the sequence $t_{n+1}-t_n$ is unbounded, but its growth rate is of order $2\log(n)$, that is, very slow.
\end{itemize}

We now show that Theorem~\ref{T:asymmetric} is not applicable to subsets of agents.

\begin{example} We consider a system $x$ of 6 particles with initial condition \[x(0)=(-3,-2,-2,2,2,3).\] Similarly to Example~\ref{ex-basic2}, define all controls $\ctr{j}{k}=0$, except in the following cases:
\begin{align*}
&\text{for } \tau \in [n, n+\log \sqrt2{} ] : \makebox[5cm][r]{$ \ctr{3}{4}(\tau) = \ctr{4}{3}(\tau) = 1\,,$} &n\in\N\cup\{0\},\\
&\text{for } \tau \in [n+\log \sqrt2{} , n+\log 2 ] : \makebox[3.8cm][r]{$\ctr{3}{1}(\tau) = \ctr{4}{6}(\tau) = 1\,.$}
\end{align*}
Similarly to Example~\ref{ex-basic2}, we compute 
\[
x\left(n+\log \sqrt2{} \right)=(-3,-2,-1,1,2,3) \text{ and }x\left(n+\log 2 \right)= (-3,-2,-2,2,2,3) .\]

The graph ${}G=G(\{n\},\{\ctr{j}{k}^{}\})$ of Theorem~\ref{T:asymmetric} then has nodes $\{34,43,31,46\}$ only. It is interesting to observe that the subgraph of ${}G$ with indices $\{3,4\}$ and arrows $\{34,43\}$ is complete, thus strongly connected, hence ${}G$ satisfies the hypotheses of Theorem~\ref{T:asymmetric}. Yet the corresponding subset of agents $\{3,4\}$ does not converge to consensus. In other terms, Theorem~\ref{T:asymmetric} cannot be applied to subsets of agents.
\end{example}

We now provide an example where Theorem~\ref{T:asymmetricBis} applies, while other conditions discussed here (Theorem~\ref{T:asymmetric}, Moreau, cut-balance) do not.

\begin{example} We consider a system of 3 particles with initial condition $(-1,0,1)$. 
Consider for $n\in\N$ a sequence $t_{n}\uparrow+\infty$ with $t_{n+1}-t_{n}\geq 6$, for example $t_{n}=\exp(\exp(n))$ or $t_{n}=6n$.
Similarly to Example~\ref{ex-basic2}, define all controls $\ctr{j}{k}$ arbitrarily, but nonnegative and bounded, except the following cases that we prescribe:
\begin{equation}\label{ex-3}
\begin{cases}
 \ctr{1}{2}(\tau) = 1 &\text{~~for } \tau \in [t_{n},t_{n}+1],\\
 \ctr{1}{3}(\tau) = 1 &\text{~~for } \tau \in [t_{n}+2,t_{n}+3],\\
\ctr{2}{3}(\tau) = 1 & \text{~~for } \tau \in [t_{n}+4,t_{n}+5].
\end{cases}
\end{equation}
Limit connections satisfy $\ctr{1}{2}^{*}\geq\mathbbm{1}_{[0,1]}$, $\ctr{1}{3}^{*}\geq\mathbbm{1}_{[2,3]}$, $\ctr{2}{3}^{*}\geq\mathbbm{1}_{[4,5]}$.
The graph ${}G(\{\ctr{j}{k}^{*}\})$ of Theorem~\ref{T:asymmetricBis} then has at least nodes $\{12,13,23\}$, thus Theorem~\ref{T:asymmetricBis} yields consensus.
\end{example}
\begin{remark}\label{R:eccoloqui}
 The key observation here is that Theorem~\ref{T:asymmetricBis} ensures convergence, even though we have no know about many of the controls $\ctr{j}{k}$, i.e those not defined in~\eqref{ex-3}. 
 If $t_{n+1}-t_{n}$ is bounded, also Theorem~\ref{T:asymmetric} applies, whatever the non-specified, bounded, connections are.
 If $t_{n+1}-t_{n}\to+\infty$ and if coefficients not specified by~\eqref{ex-3} vanish, then {{}$G=G(\{t_n\},\{\ctr{j}{k}^{}\})$} of Theorem~\ref{T:asymmetric} has no arrow and the Theorem~\ref{T:asymmetric} does not ensure consensus. If coefficients not specified by~\eqref{ex-3} vanish, with the choice $S=\{1\}$ the cut balance condition~\eqref{e-cutbalance} fails, as the right hand side vanishes.
\end{remark}

We finally provide an example with unbounded connections.

\begin{example} We consider a system $x$ of 3 particles with initial condition $(-1,0,1)$. 
Consider for $n\in\N$ a sequence $t_{n}\uparrow+\infty$ with $t_{n+1}-t_{n}\geq 6$, for example $t_{n}=\exp(\exp(n))$ or $t_{n}=6n$.
Similarly to Example~\ref{ex-basic2}, define all controls $\ctr{j}{k}$ arbitrarily, but nonnegative and bounded, except the following cases that we prescribe:
\begin{equation}\label{ex-unbounded}
\begin{cases}
 \ctr{1}{2}(\tau) = \tfrac{1}{\sqrt{\tau-t_{n}}}-1 &\text{~~for } \tau \in [t_{n},t_{n}+1],\\
 \ctr{1}{3}(\tau) = 1 &\text{~~for } \tau \in [t_{n}+2,t_{n}+3],\\
\ctr{2}{3}(\tau) = \tfrac{1}{\sqrt[3]{t_{n}+5-\tau}}-1& \text{~~for } \tau \in [t_{n}+4,t_{n}+5].
\end{cases}
\end{equation}

Limit connections satisfy $\ctr{1}{2}^{*}(t)\geq (\frac{1}{\sqrt{t}}-1)\mathbbm{1}_{[0,1]}$, $\ctr{1}{3}^{*}\geq\mathbbm{1}_{[2,3]}$, $\ctr{2}{3}^{*}(t)\geq (\frac{1}{\sqrt[3]{5-t}}-1)\mathbbm{1}_{[4,5]}$.
The graph {{}$G=G(\{t_n\},\{\ctr{j}{k}^{}\})=G(\{\ctr{j}{k}^{*}\})$} of Theorem~\ref{T:asymmetricBis} then has at least nodes $\{12,13,23\}$ so that Theorem~\ref{T:asymmetricBis} applies, granting convergence to consensus. If $t_{n+1}-t_{n}\to+\infty$ and if coefficients not specified by~\eqref{ex-unbounded} vanish, then {{}$G(\{t_n\},\{\ctr{j}{k}^{}\})$} of Theorem~\ref{T:asymmetric} has no arrow because $\ctr{1}{2}^{*}(t)= (\frac{1}{\sqrt{t}}-1)\mathbbm{1}_{[0,1]}$, $\ctr{1}{3}^{*}=\mathbbm{1}_{[2,3]}$, $\ctr{2}{3}^{*}(t) =(\frac{1}{\sqrt[3]{5-t}}-1)\mathbbm{1}_{[4,5]}$. If coefficients not specified by~\eqref{ex-unbounded} vanish, with the choice $S=\{1\}$ the cut balance condition~\eqref{e-cutbalance} fails, as the right hand side vanishes.
\end{example}

\subsection{Comparison with Moreau, Persistent Excitation, Integral Scrambling Coefficient conditions} \label{s-PE}

In this section, we compare our results with the Moreau condition, which ensures convergence of all solutions of~\eqref{E:basicsystem}. We also compare it with some stronger conditions that are discussed in the literature, namely the Persistent Excitation (PE) and the Integral Scrambling Coefficient (ISC). 

We first recall the precise definition of the conditions we study. For~\eqref{E:basicsystem}, they can be interpreted in a unified way, based on graph properties. These statements, equivalent to those in the literature but written in a different language, also highlight the chain of logical dependencies: the Moreau condition is weaker than PE. One can also prove  that the Moreau condition is weaker than ISC (see \cite{ABR-PE}), but we will prove convergence to consensus with a different approach.

First fix $T,\mu>0$ and consider a time $t\geq 0$. Define a graph $G(t)$ as follows: nodes are {{}identified with} $\{1,\ldots, N\}$ and an arrow from node $j$ to node $k$ is {{}drawn} if for all $t\geq 0$ it holds
 \begin{equation}\label{e-moreau}\int_t^{t+T} \ctr{j}{k}(\tau)\,d\tau\geq \mu.
 \end{equation}
We can now state the three conditions, \emph{that also require that $\{\ctr{j}{k}\}_{j,k=1}^{N}$ are bounded}:
\begin{itemize}
\item {\bf Moreau condition:} There exist $T,\mu>0$ such that the graph $G(t)$ given above is constant with respect to $t$ and has a globally reachable node.
\item {\bf ISC:} there exist $T,\mu>0$ such that for all $i,j\in\{1,\ldots,N\}$ with $i\neq j$ and $t\geq 0$ there exists an index $k_{ij}(t)$ such that both arrows $i\to k_{ij}(t)$ and $j\to k_{ij}(t)$ exist in $G(t)$.
\item {\bf PE:} there exist $T,\mu>0$ such that for all $j,k\in\{1,\ldots,N\}$ with $j\neq k$ the arrow $j\to k$ exists in $G(t)$ (thus also $k\to j$ and $G(t)$ must be constant).
\end{itemize}
\begin{remark}
    While Moreau and PE condition require a graph that is constant with respect to time, ISC does not require it. In the case of a finite number of agents, one can anyway adapt the Moreau condition to a time-dependent graph and prove that it is weaker than ISC, but changing the values of parameters $\mu,T$. See \cite{ABR-PE}.
\end{remark}
We now prove that the Moreau condition is equivalent to  {{}property}~\ref{item:2} of Corollary~\ref{c-1}, while ISC condition is a particular case of {{}property \eqref{E:connectionAsymmetric} in} Theorem~\ref{T:asymmetric}
; thus, our results generalize the convergence of systems under Moreau, ISC, and PE conditions proved in~\cite{bonnet2021consensus,bonnet2022consensus,moreau2004stability}.
Dropping the assumption that connections are bounded, such conditions are known to be not sufficient, see~\cite[Page 4002]{moreau2004stability}.
\begin{lemma} Consider bounded signals $\ctr{j}{k}$, which satisfy Moreau condition, or ISC, or PE. Then, all trajectories of~\eqref{E:basicsystem} converge to consensus.
\end{lemma}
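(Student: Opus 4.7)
The strategy is to reduce each hypothesis to either Corollary~\ref{c-1} (for Moreau and PE) or Theorem~\ref{T:asymmetric} (for ISC). For the \emph{Moreau} condition the reduction is essentially built in: the defining arrow rule $\int_t^{t+T}\ctr{j}{k}(\tau)\,d\tau\geq\mu$ for all $t\geq 0$ is precisely property~\ref{item:2} of Corollary~\ref{c-1}, so the Moreau graph coincides with $G(\{\ctr{j}{k}\})$ of the corollary, and the assumption that it has a globally reachable node is literally the hypothesis needed. For \emph{PE} the graph $G(t)$ is the complete directed graph, constant in $t$ and with every node globally reachable, so PE specializes Moreau.

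The main obstacle is \emph{ISC}, since there both $G(t)$ and the common successor $k_{ij}(t)$ may depend on $t$. My plan is to apply Theorem~\ref{T:asymmetric}. First, use boundedness of the $\ctr{j}{k}$ and Banach-Alaoglu to extract $t_n\to+\infty$ along which $\ctr{j}{k}(t_n+\cdot)\overset{\ast}{\rightharpoonup}\ctr{j}{k}^{*}$ for all $j,k$. For any fixed $s\geq 0$ and pair $(i,j)$, ISC at times $t_n+s$ supplies a common successor $k_{ij}(t_n+s)\in\{1,\dots,N\}$; pigeonhole over its finitely many possible values produces a further subsequence on which this value is constantly some $k$, and weak$^{\ast}$-convergence then yields $\int_s^{s+T}\ctr{i}{k}^{*}\geq\mu$ and $\int_s^{s+T}\ctr{j}{k}^{*}\geq\mu$. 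Hence the limit system itself satisfies ISC, with the same $T,\mu$. A second application of pigeonhole, now in the variable $s$, furnishes for each pair $(i,j)$ a single $k_{ij}$ such that these inequalities hold for arbitrarily large $s$. This gives $\int_t^{+\infty}\ctr{i}{k_{ij}}^{*}>0$ and $\int_t^{+\infty}\ctr{j}{k_{ij}}^{*}>0$ for every $t>0$, hence arrows $i\to k_{ij}$ and $j\to k_{ij}$ in the graph $G(\{\ctr{j}{k}^{*}\})$ of Theorem~\ref{T:asymmetric}.

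The last step is to promote this scrambling property (every pair of nodes has a common out-neighbor) to the existence of a globally reachable node in $G(\{\ctr{j}{k}^{*}\})$. Writing $R(i)$ for the forward-reachable set of $i$, I would show $\bigcap_{i\in S}R(i)\neq\emptyset$ for every nonempty $S\subseteq\{1,\dots,N\}$ by induction on $|S|$: the case $|S|=2$ is the scrambling arrow itself, namely $k_{ij}\in R(i)\cap R(j)$; in the inductive step, if $m\in\bigcap_{i\in S}R(i)$ and $n\notin S$, then the common successor of $m$ and $n$ lies in $R(m)\cap R(n)$ and, by the transitivity $R(m)\subseteq R(i)$ whenever $m\in R(i)$, also in $\bigcap_{i\in S\cup\{n\}}R(i)$. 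Any element of $\bigcap_{i=1}^{N}R(i)$ is then globally reachable, and Theorem~\ref{T:asymmetric} delivers the conclusion.
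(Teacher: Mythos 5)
Your treatment of the Moreau and PE cases is exactly the paper's: the Moreau arrow rule is property~\ref{item:2} of Corollary~\ref{c-1}, so the two graphs coincide, and PE gives the complete graph. For ISC, however, you take a genuinely different route from the paper, and it is correct. The paper first proves a combinatorial Claim that each instantaneous graph $\mathcal G(t)$ (arrows $i\to k_{ij}(t)$, $j\to k_{ij}(t)$) has a globally reachable node, via a ``halving'' operator $\Gamma$ that repeatedly replaces pairs by their common successors; it then pigeonholes on \emph{whole graphs} (there are only finitely many simple directed graphs on $N$ nodes) along a diagonal subsequence of times $t_n+\ell T$, so that one fixed graph $\mathcal G_{m^*}$ recurs infinitely often and all of its arrows survive in the limit graph of Theorem~\ref{T:asymmetric}. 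You instead pigeonhole \emph{per pair} $(i,j)$, first in $n$ to fix the common successor at a given offset $s$ and pass the inequality $\int_s^{s+T}\ctr{i}{k}^{*}\geq\mu$ to the weak$^{*}$ limit (legitimate, since the limit is already pinned down by the first extraction), then in $s$ over a countable unbounded grid such as $s=\ell T$ to get a single $k_{ij}$ recurring at arbitrarily large times; this makes the \emph{limit} graph itself scrambling, and you then convert scrambling into a globally reachable node by the induction on $\bigcap_{i\in S}R(i)\neq\emptyset$. The two combinatorial arguments (your reachable-set intersection versus the paper's $\Gamma$-halving) establish essentially the same fact, and your transfer-to-the-limit step is slightly more elementary in that it avoids the diagonal extraction over the graphs $\mathcal G(t_n+\ell T)$. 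Two cosmetic points you may wish to make explicit: the pigeonhole in $s$ should be stated over a countable unbounded set of offsets so that the argument is literal, and in the inductive step of the reachability lemma the degenerate case $m=n$ should be dispatched (e.g.\ by noting $R(n)=R(m)\subseteq\bigcap_{i\in S}R(i)$ and $R(n)\neq\emptyset$), since the ISC successor $k_{ij}$ is only defined for $i\neq j$. Neither affects the validity of the proof.
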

\begin{proof}

Observe that the graph $G$ built with the Moreau condition~\eqref{e-moreau} and the graph $H$ built with Corollary~\ref{c-1} coincide, because~\eqref{e-moreau} is condition~\ref{item:2} in Corollary~\ref{c-1}.
Since both Moreau sufficient condition and Corollary~\ref{c-1} require a globally reachable node for such graph $G=H$, they are identical sufficient conditions.

We now observe that the PE condition corresponds to the fact that the graph built with the Moreau condition is complete. Thus, it has a globally reachable node, hence consensus occurs.

We now prove that, under the ISC condition, hypotheses of Theorem~\ref{T:asymmetric} are satisfied. For each $t\in[0,+\infty)$, denote with $\mathcal G(t)$ the graph with nodes $\{1,\ldots,N\}$ and arrows given by $i\to k_{ij}$, $j\to k_{ij}$, where $k_{ij}$ is given by the ISC condition. 

We first prove that, when $\mathcal G(t)$ is constant, ISC implies Moreau condition:

{{\bf Claim:} \it Each graph $\mathcal G(t)$ has a globally reachable node.}

\begin{proof}
Consider the operator $\Gamma$ defined as follows: given a (finite) set $A$ of $n$ distinct indexes, fix any order $A=\{i_1,\ldots,i_n\}$, and define
$$\Gamma(A):=\begin{cases}
 \{k_{i_1 i_2},k_{i_3 i_4},\ldots,k_{i_{n-1},i_n}\}&\mbox{~~ for $n$ even},\\
 \{k_{i_1 i_2},k_{i_3 i_4},\ldots,k_{i_{n-2},i_{n-1}},i_n\}&\mbox{~~ for $n$ odd},
\end{cases}$$
where $k_{i j}$ is the index given by the ISC condition. Since $\Gamma(A)$ is a set, any multiple occurrences of the same element are reduced to one. As a result, the set $\Gamma(A)$ has $\left \lceil \frac{n}2\right\rceil$ elements at most, where $\lceil x\rceil$ is the smallest integer larger than or equal to $x$.

Consider now the set $A_0:=\{1,\ldots, N\}$ of agents in~\eqref{E:basicsystem}, seen as the nodes of the graph $\mathcal G(t)$. Recursively define $A_{m+1}:=\Gamma(A_m)$, until $A_m$ is reduced to a single element, which we denote with $\ell$. The fact that the process ends is a consequence of the fact that $\Gamma(A_m)$ has fewer elements than $A_m$ as soon as $A_m$ is not reduced to a single element.  By the definition of $\Gamma$, the sets $A_m$ satisfy the following property: for each $i_m\in A_m$ there exists $i_{m+1}\in A_{m+1}$ such that an arrow $i_m\to i_{m+1}$ is in $\mathcal G(t)$. By construction, each index $i_0\in A_0$ has an index $i_1\in A_1$, then an index $i_2\in A_2$, and so on; this implies that the graph includes the directed path $i_0\to i_1\to i_2 \to \ldots\to \ell^*$. Since this property holds for any $i_0\in A_0$, i.e.~for any node in the graph, the graph $\mathcal G(t)$ has a globally reachable node.
\end{proof}

After proving the claim, we have the following key observation: each $\mathcal G(t)$ is an element of the set of simple directed graphs with $N$ nodes (i.e.~graphs in which for each ordered pair of indexes $i,j$ there exists either zero or one arrow, and no arrows from $i$ to $i$, for $i,j\in\{1,\dots,N\}$). 
It is valued in a finite set, since it is contained in the set of simple directed graphs, that has $2^{N(N-1)}$ elements.

Enumerate the image graphs $\mathcal{G}(\R^{+})=\{\mathcal{G}_{1},\dots, \mathcal{G}_{K}\}$: they have a globally reachable node by the Claim.
%
%
%
%
By the Banach-Alaoglu theorem, and by Lemma~\ref{L:topology}, there exists a subsequence $t_{n}$ of $nT$, $n\in\N$, for which all the functions $f_n(t):=\ctr{j}{k}(t_n+t)$ converge, as in Definition~\ref{D:conver}, to functions $\ctr{j}{k}^{*}$.
Being valued in a finite set, up to subsequence, that we do not relabel, we can think that $\mathcal G(t_{n})$ is constantly $\mathcal G_{m_{1}}$.
Set $s_{0}:=t_{1}$.
Extract now a subsequence, that we do not relabel, so that $\mathcal G(t_{n}+T)$ is constantly $\mathcal G_{m_{2}}$ and set $s_{1}:=t_{2}$.
At the $\ell$-th step, $\ell\in\N$, extract a subsequence, that we do not relabel, so that also $\mathcal G(t_{n}+\ell T)$ is constantly $\mathcal G_{m_{\ell}}$ and set $s_{\ell}:=t_{\ell+1}$.

Denote by $\mathcal G^{*}(t)$ the graph associated to connections $\ctr{j}{k}^{*}$ with condition~\eqref{e-moreau}.
By construction, $\mathcal G^{*}(\ell T)$ contains all the arrows in $\mathcal G_{m_{\ell}}$, for $\ell\in\N$, drawn with condition~\eqref{e-moreau} on connections $\ctr{j}{k}^{*}$.
If the sequence $m_{\ell}$ contains the index $m^{*}$ for infinitely many $\ell_{r}$, $r\in\N$, then, whenever $j\to k$ is an arrow of $\mathcal G_{m^{*}}$, for every $t>0$ it holds 
\[
\int_t^{+\infty}\ctr{j}{k}^*(s)\,ds\geq \#\{\ell_{r}\ :\ \ell_{r}T\geq t\}\cdot \mu=+\infty,
\]
where $\#A$ denotes the number of elements of the set $A$. Thus, the arrow $j\to k$ belongs to the graph ${}G=G(\{s_n\},\{\ctr{j}{k}^{}\})$ constructed as in Theorem~\ref{T:asymmetric} relative to the sequence $s_{n}$.
We proved that ${}G $ contains all arrows of $\mathcal G_{m^{*}}$, hence it admits a globally reachable node too. Then, hypotheses of Theorem~\ref{T:asymmetric} 
are satisfied, and the system converges to consensus for any initial condition.
\end{proof}

\subsection{Comparison with cut-balance conditions} \label{s-cutbalance} In this section, we compare our results with the so-called cut-balance condition, introduced in~\cite{hendrickx2012convergence,martin2015continuous} either in instantaneous or non-instantaneous setting. We recall here the most general formulation, presented in~\cite[Assumptions 1-2]{martin2015continuous}, that is as follows:
\begin{itemize}
 \item {\bf Cut-balance condition}: There exists a sequence of times $\tau_n\to+\infty$ and uniform bounds $K,M>0$ such that for all subsets $S$ of indices, the following non-instantaneous property holds:
\begin{equation}\label{e-cutbalance}
\sum_{j\in S,k\not\in S} \int_{\tau_n}^{\tau_{n+1}}\ctr{j}{k}(s)\,ds\leq K \sum_{j\in S,k\not\in S} \int_{\tau_n}^{\tau_{n+1}}\ctr{k}{j}(s)\,ds\leq M.
\end{equation}
\end{itemize}
As described by the authors, this is a reciprocity condition: the outward connections from $S$ are proportional to the inward ones, over subsequent time intervals. The property is not instantaneous since connections are measured as time integrals.

We now highlight the main difference between the hypotheses of our result and the cut-balance conditions, that is the already highlighted reciprocity condition. In our reasoning, there is no comparison between inward and outward connections. Rather on the opposite, the main connectivity hypothesis is a tree-like property. We will show this aspect with the following example.

\begin{example} \label{ex-cutbalance} Take a system of four agents $(x_1,x_2,x_3,x_4)$ that interact as follows:
\begin{enumerate}
 \item $\ctr{1}{2}=\ctr{2}{1}=\ctr{3}{4}=\ctr{4}{3}=\ctr{2}{3}=1$;
 \item $\ctr{3}{2}$ bounded and nonnegative, to be chosen later;
 \item all other $\ctr{j}{k}$ are zero. 
\end{enumerate}
It is clear that the cut-balance condition~\eqref{e-cutbalance} is satisfied for some choices of $\ctr{3}{2}$ only. Indeed, by choosing $S=\{1,2\}$ one has that the condition reads as 
$$\int_{\tau_n}^{\tau_{n+1}} \ctr{2}{3}(t)\,dt=\tau_{n+1}-\tau_n\leq K \int_{\tau_n}^{\tau_{n+1}} \ctr{3}{2}(t)\,dt.$$ This is not satisfied e.g.~for any function that satisfies $\lim_{t\to+
\infty}\ctr{3}{2}(t)=0$.

In contrast, we see that for any choice of sequence $t_n\to+\infty$, all interaction functions converge to their natural limit (with constant value 1 or 0), except for $\ctr{3}{2}$. Here, the key observation is that, due to the Banach-Alaoglu theorem, there exists a subsequence, that we do not relabel, $t_n\to+ \infty$ such that $\ctr{3}{2}(t_n+t)$ converges to some limit $\ctr{3}{2}^*(t)$. The fact that this limit satisfies~\eqref{E:connectionAsymmetric} or not plays no role in the hypotheses of Theorem~\ref{T:asymmetric}: in fact, the graph {{}$G=G(\{t_n\},\{\ctr{j}{k}^{}\})=G(\{\ctr{j}{k}^{*}\})$} already contains arrows $\{12,21,23,34,43\}$ and admits a globally reachable node $\ell^*$ equal to $3$ or $4$. Thus, the system converges to consensus for any choice of the initial data and any choice of the interaction function $\ctr{3}{2}(t)$.
\end{example}

The example above shows that, in some cases, our theorems provide convergence in cases in which the cut-balance condition is not satisfied. More interestingly, it shows that our theorems can be applied by studying a subset of pairs of indices only, in the spirit of Remark~\ref{rem-BA}: indeed, assume that, if for a choice $t_n\to+\infty$, one can prove convergence of $\ctr{j}{k}(t+t_n)$ to some $\ctr{j}{k}^*(t)$ satisfying~\eqref{E:connectionAsymmetric} just for some pairs $i\to j$ and the corresponding directed graph ${}G(\{t_n\},\{\ctr{j}{k}^{}\})$ admits a globally reachable node $\ell^*$. In this case, the convergence of the $\ctr{j}{k}(t+t_n)$ to some $\ctr{j}{k}^*(t)$ for the remaining pairs of indices is ensured, by passing to a subsequence. The actual value of such remaining $\ctr{j}{k}^*(t)$ plays no role, since it amounts to add connections to  ${}G(\{t_n\},\{\ctr{j}{k}^{}\})$, that already has a globally reachable node for sure.

There is one more difference between the hypotheses of our result and the cut-balance conditions: this is about the time intervals in which hypotheses need to be proven. In our Corollary~\ref{c-1}, {{}property}~\ref{item:C} needs to be verified on time intervals of the form $[t_n,t_n+T]$ for a given sequence $t_n\to+\infty$. In the cut-balance hypothesis, one instead needs to split the whole time interval $[0,+\infty)$ into intervals of the form $[\tau_n,\tau_{n+1}]$ and verify the condition for all times.

We finally observe that our results are somehow transversal with respect to the cut-balance condition. Indeed, there are cases in which our results do not apply, while the cut-balance condition is satisfied and it ensures convergence. We show here a simple example.

\begin{example} Consider a system of three agents $(x_1,x_2,x_3)$ with these connections:
\[
\bullet\ \ctr{1}{2}(t)=\ctr{2}{1}(t)=1
\qquad
\bullet\ \ctr{2}{3}(t)=\ctr{3}{2}(t)= (t+1)^{-1}
\qquad
\bullet\ \ctr{1}{3}(t)=\ctr{3}{1}(t)=0\,.
\] 
It is clear that the controls converge to $\ctr{1}{2}^*=\ctr{2}{1}^*\equiv 1$ and $\ctr{2}{3}^*=\ctr{3}{2}^*=\ctr{1}{3}^*=\ctr{3}{1}^*=0$, thus for any choice of $t_n\to +\infty$ the graph  ${}G(\{t_n\},\{\ctr{j}{k}^{}\})$ {{}constructed} in Theorem~\ref{T:asymmetric} contains arrows $\{12,21\}$ only. Thus, our result does not ensure convergence.

Instead, one can prove that the system satisfies the cut-balance property~\eqref{ex-cutbalance} with $K=1$, since it is symmetric. Then, the system converges to consensus.
\end{example}

\begin{remark} The example above raises an open question: by a time rescaling, one can easily transform controls $\ctr{2}{3},\ctr{3}{2}(t)$ to constant positive functions, that in turn have a natural limit satisfying {{}property}~\eqref{E:connectionAsymmetric}. This comes with the price of letting controls $\ctr{1}{2},\ctr{2}{1}$ explode, then bringing the system outside the hypotheses of Theorem~\ref{T:asymmetric}. Yet, one may read the example above as a double time-scale dynamics: while agents 1,2 have a fast interaction, agents 2,3 have a slow one. Virtually, one may say that agents 1-2 first reach consensus, then the double agent 1-2 and the single agent 3 reach consensus. We aim to address this question in a future research.
\end{remark}

The example above also highlights that results about consensus can be achieved by a time rescaling. Our statements can then be slightly generalized as follows.
\begin{corollary}\label{C:reparam}
For $j,k=1,\dots,N$, let $\ctr{j}{k}$ be that are measurable for all continuous probability measures, i.e.~``universally measurable''.
Let $\rho:\R^+\to\R^+$ be increasing, absolutely continuous, and diverging at $+\infty$.
If $\norm{\ctr{j}{k}(\rho(t),x)\dot\rho(t)}_\infty$ is finite and if Theorem~\ref{T:asymmetric}, or Theorem~\ref{T:asymmetricBis}, holds for the connection functions ${\ctr{j}{k}^-}(t)=\inf_{x}\ctr{j}{k}(\rho(t),x)\dot\rho(t)$, then any global trajectory of~\eqref{E:non-lin} converges to consensus.
\end{corollary}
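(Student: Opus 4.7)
The plan is to reduce this corollary to Proposition~\ref{p:non-lin2} through a time change of variables. Given any global trajectory $x:[0,+\infty)\to\R^N$ of~\eqref{E:non-lin}, I would set $y(s):=x(\rho(s))$. Since $\rho$ is absolutely continuous and monotone, $\dot\rho$ exists almost everywhere and $y$ is itself absolutely continuous. At every $s$ at which $\dot\rho(s)$ exists and $x$ is differentiable at $\rho(s)$, the chain rule gives
\begin{equation*}
\dot y_j(s) = \dot x_j(\rho(s))\,\dot\rho(s) = \sum_{k=1}^{N} \ctr{j}{k}(\rho(s), y(s))\,\dot\rho(s)\,\bigl(y_k(s) - y_j(s)\bigr),
\end{equation*}
so $y$ solves a system of the form~\eqref{E:non-lin} with new connection functions $\widetilde{\ctr{j}{k}}(s,y) := \ctr{j}{k}(\rho(s),y)\,\dot\rho(s)$.

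Next I would verify that these coefficients fit the hypotheses of Proposition~\ref{p:non-lin2}: they are bounded by assumption; universally measurable as compositions of the universally measurable $\ctr{j}{k}$ with the continuous map $(s,y)\mapsto(\rho(s),y)$, multiplied by the Lebesgue-measurable $\dot\rho$ (cf.\ the footnote of Proposition~\ref{p:non-lin2}); and their uniform lower bound in $y$ is exactly $\ctr{j}{k}^-(s)$. In particular, the ball-restricted infimum appearing in Proposition~\ref{p:non-lin2} dominates $\ctr{j}{k}^-$, and the arrow conditions~\eqref{E:connectionAsymmetric} and~\eqref{E:connectionAsymmetric2} are monotone under pointwise increase of the connection function, so the hypotheses assumed for $\ctr{j}{k}^-$ propagate to this ball-restricted infimum. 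Proposition~\ref{p:non-lin2} then yields that $y(s)$ converges to consensus as $s\to+\infty$.

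Finally I would transfer the conclusion back to $x$ using the contractivity of the support. By Proposition~\ref{p:contractive}, the diameter of $\mathrm{supp}(x(t))$ is non-increasing in $t$, hence it admits a limit $\delta\in[0,+\infty)$. Evaluating this diameter along the divergent sequence $t_n:=\rho(n)$, which tends to $+\infty$ by hypothesis on $\rho$, gives $\delta=\lim_{n\to+\infty}\mathrm{diam}\,\mathrm{supp}(y(n))=0$, which forces $x(t)$ to converge to a common value. The main obstacle I anticipate is not the dynamical argument but the measure-theoretic bookkeeping, namely a careful justification of the chain rule for $y$ and of the universal measurability of $\widetilde{\ctr{j}{k}}$; once those are in place, Proposition~\ref{p:non-lin2} applied to the time-changed system does all the conceptual work.
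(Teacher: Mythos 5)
Your proposal is correct and follows exactly the route the paper intends: the corollary is stated right after the remark that ``results about consensus can be achieved by a time rescaling,'' and the intended argument is precisely your time change $y(s)=x(\rho(s))$ producing a system of the form~\eqref{E:non-lin} with coefficients $\ctr{j}{k}(\rho(s),y)\dot\rho(s)$, followed by the reduction of Proposition~\ref{p:non-lin2} (whose proof only uses that the along-trajectory coefficients dominate a function satisfying the theorems' hypotheses, so the global infimum $\ctr{j}{k}^-$ serves as well as the ball-restricted one). The measure-theoretic points you flag (absolute continuity of $x\circ\rho$ with the a.e.\ chain rule for monotone AC $\rho$, and universal measurability of the rescaled coefficients via closure under composition with Borel maps) are standard and do not constitute a gap.
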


Our results raise new questions about their integration with other available criteria (e.g. cut-balance) and their extension to dynamics with different time-scales (e.g. fast and slow variables).

\end{document}